\newtheorem{thm}{Theorem}
\newtheorem{lem}{Lemma}
\newtheorem{prop}{Proposition}
\newtheorem{cor}{Corollary}
\newtheorem{dfn}{Definition}
\DeclareMathOperator{\Tr}{Tr}
\pgfplotsset{compat=newest}
\tikzstyle{line} = [ draw, -latex']  
\title{On Subadditive Duality \\ for Conic Mixed-Integer Programs} 
\author{Burak Kocuk\thanks{burakkocuk@sabanciuniv.edu, Industrial Engineering Program,  Sabanc{\i} University, Istanbul, Turkey 34956.} \and
Diego A. Mor{\'{a}}n R.\thanks{diego.moran@uai.cl, School of Business,  Universidad Adolfo Ib{\'{a}}\~nez, Santiago, Chile 7941169.}} 
\date{} % Activate to display a given date or no date (if empty),
\def\blue{\color{blue}}
\def\black{\color{black}}
\def \LL{ {\mathcal{L}}}
\def \FF{ {\mathcal{F}}}
\def \rr{ {\mathbb{R}}}
\def \zz{ {\mathbb{Z}}}
\def \conv{\textup{conv}}
\def \int{ {\textup{int}}}
\def \rec{\textup{rec.cone}}
\def \tq{{\,:\,}}
\def \aff{{\textup{aff}}}
\def \MM{\mathbb{M}}
\def \RR{\mathbb{R}}
\def\blue{\color{black}}
\def\black{\color{black}}
\def\damr{\color{black}}
\begin{document}
\maketitle

\begin{abstract}
In this paper, we show that the subadditive dual of a feasible conic mixed-integer program (MIP) is a strong dual whenever it is feasible. Moreover, we show that this dual feasibility condition is equivalent to feasibility of the conic dual of the continuous relaxation of the conic MIP. In addition, we prove that all known conditions and other `natural' conditions for strong duality, such as strict mixed-integer feasibility, boundedness of the feasible set or essentially strict feasibility imply that the subadditive dual is feasible. As an intermediate result, we extend the so-called `finiteness property' from full-dimensional convex sets to  intersections of  full-dimensional convex sets and Dirichlet convex sets.
\end{abstract}

%%%%%%%%%%%%%%%%%%%%%%%%%%%%%%%%%%%%%%%%%%%%%%%%%%%%%%%%%%%%%%%%%%%%%%%%%%%%%%%%%%%%%%%%%%%%%%%%%%%%%%%%%%%%%%%%%%%
%%%%%%%%%%%%%%%%%%%%%%%%%%%%%%%%%%%%%%%%%%%%%%%%%%%%%%%%%%%%%%%%%%%%%%%%%%%%%%%%%%%%%%%%%%%%%%%%%%%%%%%%%%%%%%%%%%%
%%%%%%%%%%%%%%%%%%%%%%%%%%%%%%%%%%%%%%%%%%%%%%%%%%%%%%%%%%%%%%%%%%%%%%%%%%%%%%%%%%%%%%%%%%%%%%%%%%%%%%%%%%%%%%%%%%%
\section{Introduction}
\label{section:SectionLabel}
%%%%%%%%%%%%%%%%%%%%%%%%%%%%%%%%%%%%%%%%%%%%%%%%%%%%%%%%%%%%%%%%%%%%%%%%%%%%%%%%%%%%%%%%%%%%%%%%%%%%%%%%%%%%%%%%%%%
%%%%%%%%%%%%%%%%%%%%%%%%%%%%%%%%%%%%%%%%%%%%%%%%%%%%%%%%%%%%%%%%%%%%%%%%%%%%%%%%%%%%%%%%%%%%%%%%%%%%%%%%%%%%%%%%%%%
%%%%%%%%%%%%%%%%%%%%%%%%%%%%%%%%%%%%%%%%%%%%%%%%%%%%%%%%%%%%%%%%%%%%%%%%%%%%%%%%%%%%%%%%%%%%%%%%%%%%%%%%%%%%%%%%%%%

\

{\bf Duality for mixed-integer programs (MIPs):} Duality is an important concept in mathematical programming for both analyzing the properties of optimization problems and constructing solution methods.
%{\color{blue} 
For an optimization problem in minimization (resp., maximization) form, a dual problem is called \textit{weak} if its optimal value provides a lower (resp., upper) bound to the optimal value of the primal problem. Furthermore, a dual problem is called  \textit{strong} if: ({\blue i}) There is {\em zero duality gap}, that is, its optimal value is equal to that of the primal problem, and ({\blue ii}) It is {\em solvable}, that is, the optimal value is attained. The first property ensures that the dual problem is giving the best possible bound and the second property provides a way to obtain this best bound by finding an optimal dual feasible solution. Both of these properties are crucial in the development of effective optimization algorithms.

It is well-known that linear programming (LP) and conic programming (CP) problems and their respective duals satisfy strong duality under mild conditions (such as boundedness and feasibility or strict feasibility) \cite{BenTal01}. The case of mixed-integer linear programs (MILP) is more involved and requires the definition of a functional dual problem, the so-called subadditive dual, which is a strong dual when the data defining the primal problem is rational \cite{Guzelsoy, Nemhauser}. Recently, this latter duality result was extended to conic MIPs  in \cite{Moran} under a mixed-integer strict feasibility requirement, similar to the one needed in the continuous conic case. We also note that other types of duals have been studied in the case of general mixed-integer nonlinear programming (MINLP) problems. For instance, the Karush-Kuhn-Tucker (KKT) optimality conditions are generalized for MINLPs in \cite{Baes}, Lagrangian-based methods are used in \cite{chen2010, feizollahi2017exact}, and other geometric \cite{basu2016optimalit} or algebraic \cite{santana2017some} approaches are utilized to obtain strong duality results in particular cases.

{\bf Conic MIPs:}  Conic MIP problems generalize MILPs and have significantly more expressive power in terms of modeling. To name just a few application areas, conic MIPs are used in options pricing  \cite{Pinar13}, %operations management \cite{Atamturk12}, 
power distribution systems \cite{Kocuk2015}, % \cite{Taylor, Jabr12, Jabr13, Kocuk2015} 
Euclidean $k$-center problems \cite{Brandenberg}  and engineering  design  \cite{Dai}. We note here that all the conic MIPs used in these applications include {\black binary variables, and that this feature, rather than being the exception, is a general rule when modeling real life problems.}

In spite of the growing interest in conic MIP applications, conic MIP solvers are not as mature as their MILP counterparts. Although the subadditive dual for linear/conic MIPs do not yield straightforward solution procedures, any dual feasible solution generates a valid inequality for the primal problem. Moreover, if strong duality holds, all cutting planes are equal to or dominated by a cutting plane obtained from such a solution \cite{wolsey1981integer, Moran}. %\cite{blair1977value,jeroslow1978cutting, jeroslow1979minimal,wolsey1981integer}
We know that these valid inequalities are extremely useful for MILPs (see, for instance, \cite{bixby2007progress}) and one may expect that cutting planes designed for conic MIPs will help solve them more efficiently. 
Recent work on cutting planes for conic MIPs includes generalizations of Gomory cuts \cite{cezik1},  
rounding cuts \cite{Atamturk10,Sanjeevi2016}, 
split/disjunctive cuts \cite{Yildiz} %modaresi
and minimal valid inequalities  \cite{Fatma2016}. %Fatma

{\bf Our contributions:} In this paper, we study sufficient conditions under which the subadditive dual for conic MIPs is a strong dual. {\damr Although the mixed-integer strict feasibility condition in~\cite{Moran} is somewhat reasonable, it has two main drawbacks}: ({\blue i}) It may not always hold in practical problem settings (for instance, {\black if the conic MIP includes binary variables or its feasible region is not full-dimensional}), and ({\blue ii}) It is not `easy' to check in general, {\blue that is, there is no known polynomial time algorithm for this task}. {\damr This motivates the search for other sufficient conditions.}

Our main result is the following sufficient condition for strong duality: {\em If the primal conic MIP and the subadditive dual problems are both feasible, then strong duality holds}. {\damr Furthermore, under the assumption that the primal problem is feasible, we show that feasibility of the subadditive dual is equivalent to feasibility of the conic dual of the continuous relaxation of the primal.}

Moreover, we prove that under some `natural' conditions, which include all known sufficient conditions for strong duality in the {\damr linear and conic MIP cases}, the conic dual of the continuous relaxation of the conic MIP is feasible. As a consequence of our main result, we obtain that these {\black `natural'} conditions, some of which include cases that are `easy' to check, imply that the subadditive dual is a strong dual.

Finally, as an intermediate result {\black of independent interest}, we find a sufficient condition for the finiteness of the objective function of the {\damr convex} MIP implying the finiteness of the objective function of its continuous relaxation. This is an extension of the `finiteness property' result given  {\blue in \cite{Meyer1974} for rational polyhedra and
 in~\cite{Moran} for full-dimensional convex sets to 
 %sets that can be expressed as
   intersections of full-dimensional convex sets and Dirichlet convex sets} (a class of sets first studied in~\cite{DM2011}).

{\bf Organization of the rest of the paper:} In  Section~\ref{sec:main}, we  review some results from the literature and {\damr give precise statements of our main contributions}.
%summarize our main results. 
The proof of the new sufficient condition for strong duality is presented in Section~\ref{sec:proof1}. 
In Section~\ref{sec:proof2}, we study properties of Dirichlet convex sets and give a proof of our extension of the finiteness property. 
In Section~\ref{sec:proof3}, {\damr we prove that some generally occurring conditions on the primal conic MIP imply dual feasibility}.  
Finally, some concluding remarks are discussed in Section~\ref{section:conc}.

%%%%%%%%%%%%%%%%%%%%%%%%%%%%%%%%%%%%%%%%%%%%%%%%%%%%%%%%%%%%%%%%%%%%%%%%%%%%%%%%%%%%%%%%%%%%%%%%%%%%%%%%%%%%%%%%%%%
%%%%%%%%%%%%%%%%%%%%%%%%%%%%%%%%%%%%%%%%%%%%%%%%%%%%%%%%%%%%%%%%%%%%%%%%%%%%%%%%%%%%%%%%%%%%%%%%%%%%%%%%%%%%%%%%%%%
%%%%%%%%%%%%%%%%%%%%%%%%%%%%%%%%%%%%%%%%%%%%%%%%%%%%%%%%%%%%%%%%%%%%%%%%%%%%%%%%%%%%%%%%%%%%%%%%%%%%%%%%%%%%%%%%%%%
\section{Main results}
\label{sec:main}
%%%%%%%%%%%%%%%%%%%%%%%%%%%%%%%%%%%%%%%%%%%%%%%%%%%%%%%%%%%%%%%%%%%%%%%%%%%%%%%%%%%%%%%%%%%%%%%%%%%%%%%%%%%%%%%%%%%
%%%%%%%%%%%%%%%%%%%%%%%%%%%%%%%%%%%%%%%%%%%%%%%%%%%%%%%%%%%%%%%%%%%%%%%%%%%%%%%%%%%%%%%%%%%%%%%%%%%%%%%%%%%%%%%%%%%
%%%%%%%%%%%%%%%%%%%%%%%%%%%%%%%%%%%%%%%%%%%%%%%%%%%%%%%%%%%%%%%%%%%%%%%%%%%%%%%%%%%%%%%%%%%%%%%%%%%%%%%%%%%%%%%%%%%

%%%%%%%%%%%%%%%%%%%%%%%%%%%%%%%%%%%%%%%%%%%%%%%%%%%%%%%%%%%%%%%%%%%%%%%%%%%%%%%%%%%%%%%%%%%%%%%%%%%%%%%%%%%%%%%%%%%
\subsection{Preliminaries}
%%%%%%%%%%%%%%%%%%%%%%%%%%%%%%%%%%%%%%%%%%%%%%%%%%%%%%%%%%%%%%%%%%%%%%%%%%%%%%%%%%%%%%%%%%%%%%%%%%%%%%%%%%%%%%%%%%%

For a set $X\subseteq\rr^n$, we denote its interior as $\int(X)$, its recession cone as $\rec(X)$, its affine hull as $\aff(X)$, its (affine) dimension as $\dim(X)$ and its convex hull as $\conv(X)$. We next review some definitions that are necessary to {\damr formulate} a conic MIP.

\begin{dfn}[Regular cone]
	A cone $K \subseteq \RR^m$ is called regular if it is closed, convex, pointed and full-dimensional.
\end{dfn} 

%\begin{dfn}[Dual cone]
%	The dual cone to a cone $K \subseteq \RR^m$ is defined as $K_* = \{y\in \mathbb{R}^m: x^Ty \ge 0, \ \forall x \in K\}$.
%\end{dfn}

\begin{dfn}[Conic inequality]
	A conic inequality with respect to a regular cone $K$ is defined as $x \succeq_{K} y$, meaning that  $x - y \in K$. 
	We will denote the relation $x \in \int(K)$ alternatively as  $x \succ_K 0$.
\end{dfn}

We define a generic conic MIP as follows:

\begin{equation}\label{eq:generic}
\begin{aligned}
z^* := \inf  &\hspace{0.5em}  c^Tx + d^Ty\\
\mathrm{s.t.}   &\hspace{0.5em} A x + Gy \succeq_K b \\
& \hspace{0.5em}  x  \in   \mathbb{Z}^{n_1}, y \in   \mathbb{R}^{n_2},
\end{aligned}
\end{equation}
where $K \subseteq \RR^m$ is a regular cone, {\black $c \in \RR^{n_1}$, $d \in \RR^{n_2}$}, $A \in \RR^{m \times n_1}$, $G \in \RR^{m \times n_2}$ and $b \in \RR^{m}$.

The following definitions are instrumental in the {\damr description} of the subadditive dual problem of~\eqref{eq:generic}.

\begin{dfn}[Subadditive function]
	A function $f:\RR^m \to \RR$ is subadditive if $f(u+v) \le f(u) + f(v)$ for all $u,v \in \RR^m$.
\end{dfn}

\begin{dfn}[Non-decreasing function]
	A function $f:\RR^m \to \RR$ is non-decreasing with respect to a regular cone $K \subseteq \RR^m$  if $u \succeq_K v \Rightarrow f(u) \ge f(v)$.
\end{dfn}

We denote the set of subadditive functions that are non-decreasing with respect to a regular cone $K \subseteq \RR^m$ as $\mathcal{F}_K$ and for $f\in\mathcal{F}_K$ we denote $\bar f(x) := \lim\sup_{\delta\to0^+}\frac{f(\delta x )}{\delta}$.

The subadditive dual problem of~\eqref{eq:generic} is
\begin{equation}\label{eq:dualgeneric}
\begin{aligned}
\rho^*  :=  \sup  &\hspace{0.5em}  f(b) \\
\mathrm{s.t.}   &\hspace{0.5em} f(A^j) = -f(-A^j) = c_j &j=1,\dots,n_1 \\
&\hspace{0.5em} \bar f(G^j) = -\bar f(-G^j) = d_j &j=1,\dots,n_2 \\
& \hspace{0.5em}  f(0) = 0 \\
& \hspace{0.5em}  f \in \mathcal{F}_K,
\end{aligned}
\end{equation}
where $A^j$ (resp. $G^j$) denote the $j^\text{th}$ column of the matrix $A$ (resp. $G$).

It is not hard to show that the subadditive dual~\eqref{eq:dualgeneric} is a weak dual to the primal conic MIP~\eqref{eq:generic}, that is, any dual feasible solution provides a lower bound for the optimal value of the primal (see, for instance, Proposition 3.2 in \cite{Moran}). The following result provides a sufficient condition for the subadditive dual to be a strong dual for~\eqref{eq:generic}, that is, there is zero duality gap {\damr(i.e., $z^*=\rho^*$)} and the subadditive dual is solvable {\damr(i.e., there exists a function $f$ feasible for the dual  such that $f(b)=\rho^*$)}.

\begin{thm}(Theorem 2.4 in  \cite{Moran}) \label{Theorem} 
 	If $z^* > -\infty$ and there exists $(\hat x, \hat y) \in \mathbb{Z}^{n_1}    \times \mathbb{R}^{n_2}$ such that  $A\hat x + G \hat y \succ_K b$, then the dual problem~\eqref{eq:dualgeneric} is a strong dual for~\eqref{eq:generic}.
\end{thm}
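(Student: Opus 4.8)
The plan is to exhibit one function that is feasible for the subadditive dual~\eqref{eq:dualgeneric} and attains the primal value; together with weak duality (Proposition~3.2 in~\cite{Moran}) this gives strong duality. First I would clear the degenerate cases: strict feasibility produces a feasible point of~\eqref{eq:generic}, so $z^*<+\infty$; and if $z^*=-\infty$ then weak duality leaves the dual infeasible and there is nothing more to prove. So assume $-\infty<z^*<+\infty$ from here on.

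The candidate is (a suitable modification of) the value function
\[
v(u):=\inf\{\,c^Tx+d^Ty \;:\; Ax+Gy\succeq_K u,\ x\in\mathbb{Z}^{n_1},\ y\in\mathbb{R}^{n_2}\,\},
\]
for which $v(b)=z^*$. Its basic properties are immediate: $v$ is non-decreasing with respect to $K$; on its effective domain $D:=\{u:v(u)<+\infty\}=(A\mathbb{Z}^{n_1}+G\mathbb{R}^{n_2})-K$ it is subadditive (concatenate near-optimal solutions of $v(u_1)$ and $v(u_2)$ to obtain a feasible solution of $v(u_1+u_2)$), and $D$ is closed downward under $\succeq_K$ and closed under addition; and $v(0)=0$, since $v(0)\le 0$ is trivial while $v(0)<0$ would produce $(x_0,y_0)\in\mathbb{Z}^{n_1}\times\mathbb{R}^{n_2}$ with $Ax_0+Gy_0\succeq_K 0$ and negative cost, so that $(\hat x+\lambda x_0,\,\hat y+\lambda y_0)$, $\lambda\in\mathbb{Z}_{\ge 0}$, would be feasible for~\eqref{eq:generic} with objective $\to-\infty$, contradicting $z^*>-\infty$. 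In addition, since strict feasibility makes the continuous relaxation of~\eqref{eq:generic} full-dimensional, the finiteness property of~\cite{Moran} applies and shows this relaxation is finite-valued; being Slater-feasible it then has an optimal conic dual solution $\lambda^*\in K^*$ with $A^T\lambda^*=c$ and $G^T\lambda^*=d$, which yields the affine minorant $v(w)\ge\langle\lambda^*,w\rangle$ on $D$ and in particular $v(A^j)=c_j=-v(-A^j)$.

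Suppose now we have constructed \emph{any} $f:\mathbb{R}^m\to\mathbb{R}$ that is subadditive, non-decreasing with respect to $K$, satisfies $f(0)=0$ and $f\le v$ on $D$, and has $f(b)=v(b)$. Then the dual constraints follow purely formally. Taking $x=e_j$, $y=0$ gives $f(A^j)\le c_j$, and $x=-e_j$, $y=0$ gives $f(-A^j)\le-c_j$; with $f(0)=0$, subadditivity forces $f(A^j)=c_j=-f(-A^j)$. Taking $x=0$, $y=\pm\delta e_j$ gives $f(\delta G^j)\le\delta d_j$ and $f(-\delta G^j)\le-\delta d_j$ for all $\delta>0$; since $\bar f$ is again subadditive with $\bar f(0)=0$, this forces $\bar f(G^j)=d_j=-\bar f(-G^j)$. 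Hence $f$ is dual-feasible with objective $f(b)=z^*$, so $\rho^*=z^*$ and it is attained. Everything therefore reduces to producing such an $f$.

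This construction is the heart of the proof and the only place where strict feasibility is really used: it says precisely that $b\in\mathrm{int}(D)$, whereas in general $D\subsetneq\mathbb{R}^m$ and $v\equiv+\infty$ off $D$, so $v$ itself is inadmissible. I would obtain $f$ by \emph{lowering and extending} $v$: the minorant $\langle\lambda^*,\cdot\rangle$ from the continuous dual keeps $f$ finite from below and already reproduces the continuous-column data exactly, while $b\in\mathrm{int}(D)$ is what lets one control $v$ near $b$ --- for instance by studying sequences of the form $v(u+mb)-m\mu$ for large integers $m$ and a suitable constant $\mu$, or by inf-convolving $v|_D$ with a sublinear penalty that vanishes on $G\mathbb{R}^{n_2}-K$ and is large enough to keep $f(b)=z^*$ yet finite everywhere because $b\in\mathrm{int}(D)$. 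I expect the main obstacle to be exactly this step: guaranteeing simultaneously that the extension is real-valued on \emph{all} of $\mathbb{R}^m$, globally subadditive and non-decreasing with respect to $K$, and still equal to $z^*$ at $b$ --- in particular, controlling the value function near $\partial D$ and reconciling a possible integrality gap $z^*>\langle\lambda^*,b\rangle$ with global subadditivity.
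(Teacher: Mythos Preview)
This theorem is not proved in the present paper at all: it is quoted verbatim as ``Theorem~2.4 in~\cite{Moran}'' and then used as a black box (for instance in the proof of Proposition~\ref{Theorem:Binary}). So there is no ``paper's own proof'' to compare against; the relevant comparison is with the original argument in~\cite{Moran}.

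Your overall architecture matches that original argument: reduce to finite $z^*$, take the MIP value function as the candidate, verify that any subadditive, $K$-nondecreasing $f$ with $f(0)=0$, $f\le v$ on the effective domain and $f(b)=v(b)$ is automatically dual-feasible (your paragraph deriving $f(A^j)=c_j$ and $\bar f(G^j)=d_j$ is exactly right), and then build such an $f$ that is finite on all of $\mathbb{R}^m$. The use of the finiteness property plus conic strong duality to produce the affine minorant $\langle\lambda^*,\cdot\rangle$ is also how the original proof controls things from below.

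The genuine gap is the extension step, which you correctly flag as the crux but do not carry out. Your two suggestions are in the right spirit but, as stated, neither will close. The inf-convolution idea is problematic because a sublinear penalty that is finite everywhere will in general drag $f(b)$ strictly below $z^*$ once there is an integrality gap $z^*>\langle\lambda^*,b\rangle$; and the bare sequence $v(u+mb)-m\mu$ need not be bounded below for a single scalar $\mu$. The device actually used in~\cite{Moran} (and visible in the last corollary of the present paper) is to \emph{enlarge the MIP itself} by one auxiliary integer variable: one takes the value function of
\[
\inf\Bigl\{\,c^Tx+d^Ty+\eta\,s \;:\; Ax+Gy-bs\succeq_K u,\ (x,y)\in\mathbb{Z}^{n_1}\times\mathbb{R}^{n_2},\ s\in\mathbb{Z}\,\Bigr\},
\]
with $\eta$ chosen using the continuous optimum (e.g.\ $\eta=z^*-2\theta^*$ where $\theta^*$ is the LP/conic relaxation value). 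Strict feasibility --- i.e.\ $A\hat x+G\hat y-b\succ_K 0$ --- is exactly what guarantees that for every $u$ one can pick $s$ large and stay feasible, so this value function is finite everywhere; the choice of $\eta$ and the minorant $\langle\lambda^*,\cdot\rangle$ keep it bounded below and force the value at $u=b$ to be $z^*$. This is the missing concrete construction you need to replace your two heuristics.
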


We note here that the sufficient condition in Theorem~\ref{Theorem}   is similar to the strict feasibility condition for strong duality in the conic case (see Condition {\em (b.)} in Theorem~\ref{sdual_cp} below with $A_2=0$, $b_2=0$).

\begin{thm}[Duality for conic programming \cite{shapiro2003duality, BenTal01}]\label{sdual_cp} Let $A_1\in \rr^{m_1 \times n}$, $A_2\in \rr^{m_2 \times n}$, $c \in \rr^n$, $b_1 \in \rr^{m_1}$, $b_2 \in \rr^{m_2}$ and let $K\subseteq\rr^{\blue m_1}$ be a regular cone. Consider the primal conic program $\inf\{c^Tx \tq A_1x \succeq_K b_1,A_2x\geq b_2\}$ and its corresponding dual conic program $\sup\{b_1^T \lambda_1 +b_2^T \lambda_2 \tq A_1^T \lambda_1 +A_2^T \lambda_2 = c,\ \lambda_1 \succeq_{K_*}0, \lambda_2 \geq0\}$, where $K_* :=  \{\lambda\in \mathbb{R}^m: \lambda^T x \ge 0, \ \forall x \in K\}$ is the dual cone to   $K$. Then,
\begin{enumerate}
	\item Weak duality always holds.
	\item If either (a.) {\blue the} feasible region of the continuous relaxation is bounded, or (b.) {\blue there} exists an essentially strictly feasible point, that is, a point $\hat{x}\in\rr^n$ such that $A_1\hat{x} \succ_K b_1$ and $A_2 \hat x\geq b_2$, then strong duality holds.
\end{enumerate}
%\begin{enumerate}
%\item ({\em Weak duality}) For all $x$ feasible for the primal and for all $y$ feasible for the dual, we have that $b^Ty\leq c^Tx$.
%\item If there exists $\hat{x}\in\rr^n$ such that $A\hat{x} \succ_K b$ and primal is bounded below, then strong duality holds.
%\end{enumerate}
\end{thm}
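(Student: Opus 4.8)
\medskip
\noindent\textbf{Proof proposal.} The plan is to read off weak duality from a one-line inner-product identity and to obtain strong duality from a single separation argument, dealing with the polyhedral block $A_2x\ge b_2$ by a reduction in case~(b) and by a perturbation-and-limit argument in case~(a). For weak duality: if $x$ is primal feasible and $(y,u)$ is dual feasible, then $c^Tx=(A_1^Ty+A_2^Tu)^Tx=y^T(A_1x-b_1)+u^T(A_2x-b_2)+b_1^Ty+b_2^Tu\ge b_1^Ty+b_2^Tu$, since $A_1x-b_1\in K$, $y\in K_*$, $A_2x-b_2\ge0$ and $u\ge0$; taking $\inf$ over $x$ and $\sup$ over $(y,u)$ gives $d^*\le p^*$.

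For strong duality I would assume $p^*$ finite (if $p^*=-\infty$ the dual is infeasible by weak duality and there is nothing to prove; under~(a) or~(b) together with feasibility, $p^*$ is finite), and form the convex set
\[
C:=\bigl\{\bigl(c^Tx-p^*+t,\;b_1-A_1x+k,\;b_2-A_2x+\ell\bigr):x\in\rr^n,\ t\ge0,\ k\in K,\ \ell\ge0\bigr\}\subseteq\rr^{1+m_1+m_2}.
\]
Optimality of $p^*$ makes the ray $\{(-\delta,0,0):\delta>0\}$ disjoint from $C$, so the two convex sets can be separated by some $(\mu,\lambda_1,\lambda_2)\ne0$ with $\mu r+\lambda_1^Ts_1+\lambda_2^Ts_2\ge0$ for all $(r,s_1,s_2)\in C$. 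Letting $x$ range over $\rr^n$ forces $A_1^T\lambda_1+A_2^T\lambda_2=\mu c$; letting $t,k,\ell$ range forces $\mu\ge0$, $\lambda_1\in K_*$, $\lambda_2\ge0$; and the constant term gives $b_1^T\lambda_1+b_2^T\lambda_2\ge\mu p^*$. If $\mu>0$, then $(\lambda_1,\lambda_2)/\mu$ is dual feasible with objective value $\ge p^*$, hence dual optimal by weak duality, and strong duality follows. So the entire difficulty is to rule out the ``vertical'' case $\mu=0$.

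In case~(b), evaluating the separating inequality at the essentially strictly feasible point $\hat x$ (with $t=k=\ell=0$) gives $-\lambda_1^T(A_1\hat x-b_1)-\lambda_2^T(A_2\hat x-b_2)\ge0$; since $A_1\hat x-b_1\in\int(K)$ has strictly positive inner product with every nonzero element of $K_*$ while $\lambda_2^T(A_2\hat x-b_2)\ge0$, this forces $\lambda_1=0$ when $\mu=0$, whence $A_2^T\lambda_2=0$ and $b_2^T\lambda_2=0$ with $\lambda_2\ge0$, $\lambda_2\ne0$ --- that is, an implicit equality in $A_2x\ge b_2$. I would remove such obstructions by a preprocessing step: split off the implicit equalities of the polyhedron $\{x:A_2x\ge b_2\}$, pick a point $x^0$ of its relative interior close enough to $\hat x$ that $A_1x^0-b_1$ still lies in $\int(K)$, and eliminate the equalities by substitution. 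The reduced conic program is then strictly feasible for the \emph{combined} cone $K\times\rr_+^{m'_2}$, so the interior-pairing argument applied to \emph{both} blocks at $x^0$ kills $\mu=0$ outright and the reduced dual is a strong dual; lifting a reduced-dual optimum back to the original dual (recovering the multipliers of the eliminated equalities from $A_1^Ty+A_2^Tu=c$, and checking the objective is preserved) is routine linear algebra. I expect this bookkeeping to be the most laborious part, though not the deepest.

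In case~(a), I would fix $e\in\int(K)$ and, for $\epsilon>0$, relax the right-hand sides to $b_1-\epsilon e$ and $b_2-\epsilon\mathbf1$, obtaining a program $P_\epsilon$. Every point feasible for the original problem is strictly feasible for the combined cone of $P_\epsilon$, so case~(b) applies to $P_\epsilon$; moreover the feasible region of $P_\epsilon$ has the same recession cone $\{d:A_1d\in K,\ A_2d\ge0\}$ as the original (bounded, hence recession-trivial) region, so $P_\epsilon$ is again bounded. Since the dual feasible set of $P_\epsilon$ is exactly that of the original dual, an optimal $(y_\epsilon,u_\epsilon)$ for the dual of $P_\epsilon$ is feasible for the original dual with original objective value $p^*_\epsilon+\epsilon(e^Ty_\epsilon+\mathbf1^Tu_\epsilon)$. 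As $\epsilon\downarrow0$ the feasible regions are eventually uniformly bounded and outer semicontinuous in $\epsilon$, which gives $p^*_\epsilon\to p^*$, hence zero duality gap, and attainment of the primal optimum by compactness; a dual optimal solution of the original problem is recovered as a subsequential limit of the $(y_\epsilon,u_\epsilon)$, chosen of minimal norm. Showing that this sequence cannot escape to infinity --- any limiting direction of an escaping sequence must be a recession direction of the (common) dual feasible set, which I would pin down using $e\in\int(K)$ together with the relaxation term $-\epsilon(e^T\lambda_1+\mathbf1^T\lambda_2)$ --- is the one genuinely delicate point of the argument, and the step I would expect to cost the most care.
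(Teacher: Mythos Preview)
The paper does not supply its own proof of this theorem: it is stated in the preliminaries section with citations to \cite{shapiro2003duality, BenTal01} and used as a black box (notably in the proof of Theorem~\ref{TheoremESF}). So there is no in-paper argument to compare your proposal against.

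That said, your sketch follows the standard route found in those references. Weak duality is exactly the one-line inner-product computation. For case~(b), the separation-of-$C$ argument together with the preprocessing of implicit equalities in the polyhedral block is the usual way essentially strict feasibility is handled (this is precisely how Ben-Tal and Nemirovski set things up). For case~(a), your perturbation-and-limit idea is natural, but the step you flag as ``genuinely delicate''---boundedness of the dual iterates $(y_\epsilon,u_\epsilon)$---is in fact a real gap as written: the inequalities you have in hand, namely $b_1^Ty_\epsilon+b_2^Tu_\epsilon\le p^*$ and $(b_1-\epsilon e)^Ty_\epsilon+(b_2-\epsilon\mathbf1)^Tu_\epsilon=p^*_\epsilon$, do not by themselves control $\|(y_\epsilon,u_\epsilon)\|$, and escaping directions in the dual feasible set are not ruled out merely by $e\in\int(K)$. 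The cleaner argument in the cited sources avoids this limit entirely: boundedness of the primal feasible region forces the optimal-value function $v(h_1,h_2)=\inf\{c^Tx:A_1x\succeq_K h_1,\ A_2x\ge h_2\}$ to be finite on a neighbourhood of $(b_1,b_2)$ (same trivial recession cone), hence convex and continuous there, and any subgradient of $v$ at $(b_1,b_2)$ is a dual optimal solution. If you want to salvage the perturbation approach, you would need to pick the $(y_\epsilon,u_\epsilon)$ of minimal norm and argue via the subdifferential of $v$ that these minimal-norm selections stay bounded---which essentially reintroduces the value-function viewpoint anyway.
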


%\begin{thm}[Duality for conic programming]\label{sdual_cp} Let $A \in \rr^{m \times n}$, $c \in \rr^n$, $b \in \rr^m$. Let $K\subseteq\rr^m$ be a regular cone. Then:
%\begin{enumerate}
%\item ({\em Weak duality}) For all $x\in \{x\in \rr^n\,:\, Ax \succeq_K b\}$ and $y\in \{y\in \rr^m\,:\, A^Ty = c,\ y\succeq_{K_*}0\}$ we have that $b^Ty\leq c^Tx$.
%\item ({\em Strong duality}) If there exists $\hat{x}\in\rr^n$ such that $A\hat{x} \succ_K b$ and $\inf\{c^Tx:Ax \succeq_K b\}>-\infty$, then
%$$\inf\{c^Tx:Ax \succeq_K b\}=\max\{b^Ty:A^Ty = c,\ y\succeq_{K_*}0\}.$$
%\end{enumerate}
%\end{thm}

%%%%%%%%%%%%%%%%%%%%%%%%%%%%%%%%%%%%%%%%%%%%%%%%%%%%%%%%%%%%%%%%%%%%%%%%%%%%%%%%%%%%%%%%%%%%%%%%%%%%%%%%%%%%%%%%%%%
\subsection{A new sufficient condition for strong duality}
%%%%%%%%%%%%%%%%%%%%%%%%%%%%%%%%%%%%%%%%%%%%%%%%%%%%%%%%%%%%%%%%%%%%%%%%%%%%%%%%%%%%%%%%%%%%%%%%%%%%%%%%%%%%%%%%%%%

Although the strict feasibility conditions in Theorem~\ref{Theorem} and Theorem~\ref{sdual_cp} { are somewhat analogous, in the MIP case the condition has a crucial limitation: it is not satisfied for conic MIPs that include binary variables and/or have equality constraints in its formulation, or in general if the conic set does not include a mixed-integer point in its interior (see Figure~\ref{Theo1_picture} below). These are very important cases, as conic MIPs with these characteristics often arise when modeling real-life problems.

\begin{figure}[H]
\centering
\begin{subfigure}{6.25cm}
\centering
\begin{tikzpicture}
% Specify the coordinates
\coordinate (A00) at (0,0);
\coordinate (A01) at (0,1);
\coordinate (A02) at (0,2);
\coordinate (A03) at (0,3);
\coordinate (A10) at (1,0);
\coordinate (A11) at (1,1);
\coordinate (A12) at (1,2);
\coordinate (A13) at (1,3);
\coordinate (A20) at (2,0);
\coordinate (A21) at (2,1);
\coordinate (A22) at (2,2);
\coordinate (A23) at (2,3);
\coordinate (A30) at (3,0);
\coordinate (A31) at (3,1);
\coordinate (A32) at (3,2);
\coordinate (A33) at (3,3);
\coordinate (A105) at (1,0.25);
\coordinate (A305) at (3,0.5);
\coordinate (A2305) at (2.3,0.5);
\coordinate (A232) at (2.3,2);
\coordinate (A316) at (3,1.6);
\filldraw[gray, fill opacity=0.3] (A12) -- (A232) -- (A316) -- (A305) -- (A105) -- (A12);
%\draw [dashed] (A5) -- (An);
%\node [left] at (A1) {$A_1$};
%\node [above] at (A2) {$A_2$};
%\node [right] at (A3) {$A_3$};
%\node [right] at (A4) {$A_4$};
%\node [below] at (A5) {$A_5$};
%\node [left] at (An) {$A_n$};
%\node [left] at (A7) {$\color{black}{\bullet}$};
%\node  at (A00)  \Point circle (0.05) node[above right] {$asd$}; %{$\color{black}{\bullet}$};%(0,0);
\draw[fill=black] (A00) circle (0.05) node[above right] { };%
\draw[fill=black] (A01)  circle (0.05) node[above right] { };%{$\color{black}{\bullet}$};%(0,1);
\draw[fill=black] (A02)  circle (0.05) node[above right] { };%{$\color{black}{\bullet}$};%(0,2);
\draw[fill=black] (A03)  circle (0.05) node[above right] { };%{$\color{black}{\bullet}$};%(0,3);
\draw[fill=black] (A10)  circle (0.05) node[above right] { };%{$\color{black}{\bullet}$};%(1,0);
\draw[fill=red] (A11) circle (0.05) node[above right] { };% {$\color{red}{\bullet}$};%(1,1);
\draw[fill=red]  (A12)  circle (0.05) node[above right] { };%{$\color{red}{\bullet}$};%(1,2);
\draw[fill=black]  (A13) circle (0.05) node[above right] { };% {$\color{black}{\bullet}$};%(1,3);
\draw[fill=black] (A20) circle (0.05) node[above right] { };% {$\color{black}{\bullet}$};%(2,0);
\draw[fill=red] (A21)  circle (0.05) node[above] {$\hat x$};%{$\color{red}{\bullet}$};%(2,1);
\draw[fill=red] (A22)  circle (0.05) node[above right] { };%{$\color{red}{\bullet}$};%(2,2);
\draw[fill=black] (A23)  circle (0.05) node[above right] { };%{$\color{black}{\bullet}$};%(2,3);
\draw[fill=black] (A30) circle (0.05) node[above right] { };% {$\color{black}{\bullet}$};%(3,0);
\draw[fill=black] (A31) circle (0.05) node[above right] { };% {$\color{black}{\bullet}$};%(3,1);
\draw[fill=black] (A32) circle (0.05) node[above right] { };% {$\color{black}{\bullet}$};%(3,2);
\draw[fill=black] (A33) circle (0.05) node[above right] { };% {$\color{black}{\bullet}$};%(3,3);
\filldraw[gray, fill opacity=0.3] (1.15,1.85) circle (1.42cm);
%
%        \foreach \Point/\PointLabel in {(1.1,1.1)/A, (3.1,1.1)/B, (1,4)/P_1, (3.1,4.1)/P_1}
%        \draw[fill=black] \Point circle (0.05) node[above right] {$\PointLabel$};
\end{tikzpicture}
\caption{There exist a strictly feasible mixed-integer point.}
\end{subfigure}
\begin{subfigure}{6.25cm}
\centering
\begin{tikzpicture}
% Specify the coordinates
\coordinate (A00) at (0,0);
\coordinate (A01) at (0,1);
\coordinate (A02) at (0,2);
\coordinate (A03) at (0,3);
\coordinate (A10) at (1,0);
\coordinate (A11) at (1,1);
\coordinate (A12) at (1,2);
\coordinate (A13) at (1,3);
\coordinate (A20) at (2,0);
\coordinate (A21) at (2,1);
\coordinate (A22) at (2,2);
\coordinate (A23) at (2,3);
\coordinate (A30) at (3,0);
\coordinate (A31) at (3,1);
\coordinate (A32) at (3,2);
\coordinate (A33) at (3,3);
\coordinate (A105) at (1,0.25);
\coordinate (A305) at (3,0.5);
\coordinate (A2305) at (2.3,0.5);
\coordinate (A232) at (2.3,2);
\coordinate (A316) at (3,1.6);
\filldraw[gray, fill opacity=0.3] (A12) -- (A232) -- (A316) -- (A305) -- (A105) -- (A12);
%\draw [dashed] (A5) -- (An);
%\node [left] at (A1) {$A_1$};
%\node [above] at (A2) {$A_2$};
%\node [right] at (A3) {$A_3$};
%\node [right] at (A4) {$A_4$};
%\node [below] at (A5) {$A_5$};
%\node [left] at (An) {$A_n$};
%\node [left] at (A7) {$\color{black}{\bullet}$};
%\node  at (A00)  \Point circle (0.05) node[above right] {$asd$}; %{$\color{black}{\bullet}$};%(0,0);
\draw[fill=black] (A00) circle (0.05) node[above right] { };%
\draw[fill=black] (A01)  circle (0.05) node[above right] { };%{$\color{black}{\bullet}$};%(0,1);
\draw[fill=black] (A02)  circle (0.05) node[above right] { };%{$\color{black}{\bullet}$};%(0,2);
\draw[fill=black] (A03)  circle (0.05) node[above right] { };%{$\color{black}{\bullet}$};%(0,3);
\draw[fill=black] (A10)  circle (0.05) node[above right] { };%{$\color{black}{\bullet}$};%(1,0);
\draw[fill=red] (A11) circle (0.05) node[above right] { };% {$\color{red}{\bullet}$};%(1,1);
\draw[fill=red]  (A12)  circle (0.05) node[above right] { };%{$\color{red}{\bullet}$};%(1,2);
\draw[fill=black]  (A13) circle (0.05) node[above right] { };% {$\color{black}{\bullet}$};%(1,3);
\draw[fill=black] (A20) circle (0.05) node[above right] { };% {$\color{black}{\bullet}$};%(2,0);
\draw[fill=black] (A21)  circle (0.05) node[above right] { };%{$\color{red}{\bullet}$};%(2,1);
\draw[fill=red] (A22)  circle (0.05) node[above right] { };%{$\color{red}{\bullet}$};%(2,2);
\draw[fill=black] (A23)  circle (0.05) node[above right] { };%{$\color{black}{\bullet}$};%(2,3);
\draw[fill=black] (A30) circle (0.05) node[above right] { };% {$\color{black}{\bullet}$};%(3,0);
\draw[fill=black] (A31) circle (0.05) node[above right] { };% {$\color{black}{\bullet}$};%(3,1);
\draw[fill=black] (A32) circle (0.05) node[above right] { };% {$\color{black}{\bullet}$};%(3,2);
\draw[fill=black] (A33) circle (0.05) node[above right] { };% {$\color{black}{\bullet}$};%(3,3);
\filldraw[gray, fill opacity=0.3] (1.0,2) circle (1.25cm);
%
%        \foreach \Point/\PointLabel in {(1.1,1.1)/A, (3.1,1.1)/B, (1,4)/P_1, (3.1,4.1)/P_1}
%        \draw[fill=black] \Point circle (0.05) node[above right] {$\PointLabel$};
\end{tikzpicture}
\caption{There is no strictly feasible mixed-integer point.}
\end{subfigure}
\caption{Two cases comparing the applicability of Theorem~\ref{Theorem}.}
\label{Theo1_picture}
\end{figure}
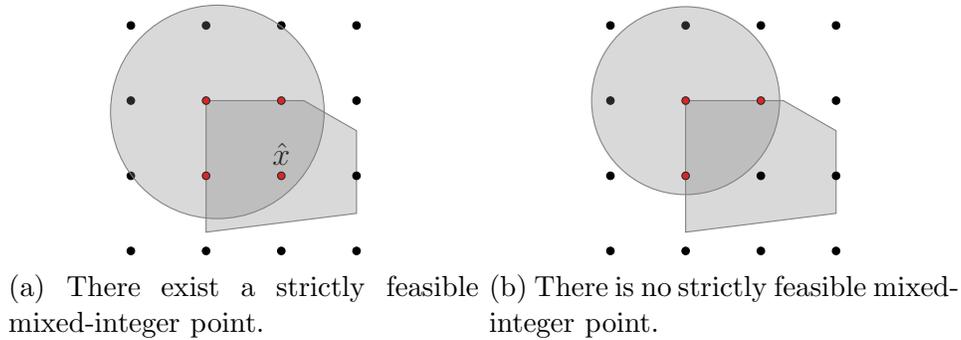

In this paper, we generalize all the known sufficient conditions for strong duality by showing that whenever the subadditive dual is feasible, it is a strong dual. Notice that when considering {\damr the subadditive dual problem~\eqref{eq:dualgeneric}}, this sufficient condition is the most general possible. {\damr Furthermore, we show that this condition is equivalent to feasibility of the conic dual of the continuous relaxation of the conic MIP.} We state our result in the theorem below.

\begin{thm}\label{Theorem:Bounded} Assume that the primal problem~\eqref{eq:generic} is feasible, then:
\begin{enumerate}
\item If the subadditive dual problem~\eqref{eq:dualgeneric} is feasible, then it is a strong dual for~\eqref{eq:generic}.
\item {\damr The subadditive dual~\eqref{eq:dualgeneric} is feasible if and only if the {conic} dual of the continuous relaxation of~\eqref{eq:generic} is feasible.}
\end{enumerate}
\end{thm}

We obtain the following corollary.

\begin{cor}\label{Cor:Bounded}
	If the primal problem~\eqref{eq:generic}  and the conic dual of its continuous relaxation are both feasible, then~\eqref{eq:dualgeneric} is a strong dual for~\eqref{eq:generic}.
\end{cor}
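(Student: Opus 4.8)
The plan is to deduce this corollary directly from Theorem~\ref{Theorem:Bounded}: since the primal~\eqref{eq:generic} is feasible by hypothesis, it suffices to show that feasibility of the dual of the continuous relaxation forces the subadditive dual~\eqref{eq:dualgeneric} to be feasible, after which Theorem~\ref{Theorem:Bounded} applies verbatim. Concretely, I would turn a feasible point of the conic dual into a \emph{linear} member of $\mathcal{F}_K$ that satisfies all the constraints of~\eqref{eq:dualgeneric}.

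First I would write down the dual of the continuous relaxation of~\eqref{eq:generic}. Relaxing $x\in\zz^{n_1}$ to $x\in\rr^{n_1}$ and applying Theorem~\ref{sdual_cp} with the single constraint block $A_1=[A\ G]$, right-hand side $b_1=b$, and cost vector $(c;d)$ (and no inequality block), the dual of the relaxation is $\sup\{b^T\lambda : A^T\lambda=c,\ G^T\lambda=d,\ \lambda\succeq_{K_*}0\}$. Let $\lambda\in\rr^m$ be a feasible point of this problem, which exists by hypothesis, and define $f:\rr^m\to\rr$ by $f(u):=\lambda^Tu$. Then $f$ is linear, hence subadditive, and $f(0)=0$; and if $u\succeq_K v$, i.e. $u-v\in K$, then $\lambda^T(u-v)\ge 0$ because $\lambda\in K_*$, so $f$ is non-decreasing with respect to $K$; thus $f\in\mathcal{F}_K$. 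A direct computation using linearity gives $\bar f(u)=\limsup_{\delta\to0^+}f(\delta u)/\delta=\lambda^Tu$, so $\bar f=f$. The remaining constraints of~\eqref{eq:dualgeneric} then hold coordinatewise: $f(A^j)=\lambda^TA^j=(A^T\lambda)_j=c_j=-\lambda^T(-A^j)=-f(-A^j)$ and, likewise, $\bar f(G^j)=(G^T\lambda)_j=d_j=-\bar f(-G^j)$. Hence $f$ is feasible for~\eqref{eq:dualgeneric}.

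I do not anticipate a real obstacle here: the corollary is a bookkeeping consequence of the substantial Theorem~\ref{Theorem:Bounded}, and the only points needing a little care are the evaluation of $\bar f$ for the linear functional $f$ and the verification of the paired equalities in~\eqref{eq:dualgeneric}, both of which are immediate from linearity of $f$ and from $\lambda$ satisfying $A^T\lambda=c$, $G^T\lambda=d$. With $f$ exhibited, both~\eqref{eq:generic} and~\eqref{eq:dualgeneric} are feasible, and Theorem~\ref{Theorem:Bounded} yields that~\eqref{eq:dualgeneric} is a strong dual for~\eqref{eq:generic}.
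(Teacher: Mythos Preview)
Your proposal is correct and follows exactly the same approach as the paper: the authors note (in the sentence immediately preceding the corollary) that any $\lambda$ feasible for the conic dual of the continuous relaxation yields the linear function $f(u)=\lambda^Tu$ feasible for~\eqref{eq:dualgeneric}, and then invoke Theorem~\ref{Theorem:Bounded}. Your write-up simply spells out in detail the verifications (subadditivity, monotonicity, $\bar f=f$, and the column constraints) that the paper leaves implicit.
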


{For a feasible conic MIP}, notice that the condition in Corollary~\ref{Cor:Bounded} is simpler than the one given in Theorem~\ref{Theorem}, as it only requires checking the feasibility of a conic set rather than finding a mixed-integer point in the interior of a  conic set, which is an NP-complete problem even for polyhedral sets {\black\cite[Chapter~18.1]{schrijver1998theory}}. %\textcolor{red}{[BK] The reasoning is not correct!}  
Furthermore, in this paper we show that some `natural' conditions for~\eqref{eq:dualgeneric} to be a strong dual for~\eqref{eq:generic}, including the sufficient conditions known in the literature, actually imply that the conic dual of the continuous relaxation of~\eqref{eq:generic} is feasible (see Section~\ref{section:result3}).

%%%%%%%%%%%%%%%%%%%%%%%%%%%%%%%%%%%%%%%%%%%%%%%%%%%%%%%%%%%%%%%%%%%%%%%%%%%%%%%%%%%%%%%%%%%%%%%%%%%%%%%%%%%%%%%%%%%
\subsection{Finiteness property for  Dirichlet convex sets}
%%%%%%%%%%%%%%%%%%%%%%%%%%%%%%%%%%%%%%%%%%%%%%%%%%%%%%%%%%%%%%%%%%%%%%%%%%%%%%%%%%%%%%%%%%%%%%%%%%%%%%%%%%%%%%%%%%%

Next we study the finiteness property, a relationship between the finiteness of the optimal value of a general convex MIP and  the finiteness of the optimal value of its continuous relaxation. 

{\blue
\begin{dfn}[Finiteness property]
A convex set $X {\black \subseteq \mathbb{R}^{n_1+n_2}}$ is said to have the finiteness property with respect to $\zz^{n_1}\times\rr^{n_2}$ if for every vector $c \in  \mathbb{R}^{n_1+n_2} $, we have
%linear objective function that is bounded below  in $X \cap (\zz^{n_1}\times\rr^{n_2})$,  the same objective function is also bounded below in $X$. Then,
$\sup \{  c^Tx \tq x \in X \}<+\infty$ if and only if $\sup \{ c^Tx  \tq  x \in X \cap ( \zz^{n_1}\times\rr^{n_2} ) \}<+\infty $.
\end{dfn}
}

The finiteness property is closely related to strong duality. A sufficient condition for general convex sets to have this property is stated in \cite{Moran}. In this paper, we show a more general sufficient condition for the finiteness property to hold {\damr and then use it to} show the results {\damr that we describe} in Section~\ref{section:result3} {\damr below}. Before stating {\black the new sufficient condition}, we need some definitions.

\begin{dfn}[Mixed-lattices]
A mixed-lattice is a set of the form
$$\MM=\{Ax+By\tq x\in\zz^{n_1},\ y\in\rr^{n_2}\},$$
where  $A\in\rr^{m\times {n_1}}$, $B\in \rr^{m\times {n_2}}$ and the set $\LL=\{Ax\tq x\in\zz^{n_1}\}$ is a lattice contained in $V^\perp$, where $V=\{By\tq y\in\rr^{n_1}\}$. If $\MM=\{Ax\tq x\in\zz^{n_1}\}$, that is, there are no `continuous' components, we simply call $\MM$ a lattice.

%We say that the mixed-integer lattice is full-dimensional if $\gen{[A\ B]}=\rr^m$, that is, the columns of $[A\ B]$ generate $\rr^m$.
\end{dfn} 

The following definition is a slight modification of the definition of ``Convex sets with the Dirichlet property'' that appears in \cite{DM2011}.

\begin{dfn}[Dirichlet convex sets]
	A convex set $P\subseteq \rr^n$ is said to be a Dirichlet convex set with respect to a mixed-lattice $\MM$ if for all $z\in P\cap \MM$, $r\in \rec(P)$ and for all $\epsilon>0$, $\gamma\geq0$ there exists a point $w\in P\cap \MM$ at a {\black (Euclidean)} distance less than $\epsilon$ from the half-line $\{z + \lambda r\tq \lambda \geq \gamma\}$. 
\end{dfn} 

Examples of Dirichlet convex sets with respect to $\MM=\zz^{n_1}\times\rr^{n_2}$ are, for instance, bounded sets,  rational polyhedra and strictly convex sets (see Proposition~\ref{Dsetsprop} and Corollary~\ref{corProp2} in Section~\ref{sec:proof2}).

The finiteness property {\blue was first proven for rational polyhedra in \cite{Meyer1974}. A related} result in \cite{Moran} states that if a convex set $X$ contains a mixed-integer point in its interior, then $X$ has the finiteness property (see Proposition 4.5 in \cite{Moran}). We extend {\blue the latter result} as follows. 

\begin{thm}\label{Dfiniteness}
Let $\MM\subseteq \rr^n$ be a mixed-lattice, let $c\in\rr^n$ and let $P\subseteq \rr^n$ be a Dirichlet convex set with respect to $\MM$. Let $S=P\cap \MM$ and let $X\subseteq\rr^n$ be a closed convex set such that $\int(X)\cap S\neq\emptyset$. Then,
$$\sup_{x\in X \cap S} c^Tx<+\infty\qquad\Leftrightarrow\qquad\sup_{x\in X\cap P} c^Tx<+\infty.$$
\end{thm}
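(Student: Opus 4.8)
The forward implication $(\Leftarrow)$ is immediate: $X\cap S\subseteq X\cap P$, so a linear functional bounded above on $X\cap P$ is bounded above on $X\cap S$. The plan is to prove $(\Rightarrow)$ in contrapositive form: assuming $\sup_{x\in X\cap P}c^{T}x=+\infty$, I will produce a sequence $\{w_{k}\}\subseteq X\cap S$ with $c^{T}w_{k}\to+\infty$. Fix $\bar x\in\int(X)\cap S$, which exists by hypothesis. After a linear change of coordinates --- under which a mixed-lattice becomes $\zz^{n_{1}}\times\rr^{n_{2}}$, and which preserves closedness, convexity, interiors, recession cones and the Dirichlet property --- I assume $\MM=\zz^{n_{1}}\times\rr^{n_{2}}$; I also take $P$ to be closed, as it is in the applications.

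The key tool is a geometric lemma: \emph{if $z\in\int(X)$ and $d\in\rec(X)$, then there is $\delta>0$ such that the tube $T_{\delta}:=\{z+\lambda d+v:\lambda\ge 0,\ \|v\|\le\delta\}$ is contained in $X$.} This follows from the line-segment principle (which already gives $z+\rr_{\ge 0}d\subseteq\int(X)$) together with a supporting-hyperplane argument forbidding $\dist(z+\lambda d,\partial X)\to 0$ as $\lambda\to\infty$. Granting the lemma, it suffices to find a direction $d\in\rec(X)\cap\rec(P)$ with $c^{T}d>0$: for such $d$ the half-line $\bar x+\rr_{\ge 0}d$ lies in $P$ (recession cone of the closed set $P$, since $\bar x\in P$), a tube $T_{\delta}$ around it lies in $X$, and applying the Dirichlet property of $P$ at $z=\bar x\in P\cap\MM$, with direction $d$, radius $\delta/2$, and arbitrarily large $\gamma$, we obtain $w\in P\cap\MM=S$ with $\|w-(\bar x+\lambda^{*}d)\|<\delta/2$ for some $\lambda^{*}\ge\gamma$; then $w\in T_{\delta}\subseteq X$, so $w\in X\cap S$, and $c^{T}w\ge c^{T}\bar x+\gamma\,c^{T}d-\|c\|\delta/2\to+\infty$.

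To produce $d$, take $x_{k}\in X\cap P$ with $c^{T}x_{k}\to+\infty$; then $\|x_{k}-\bar x\|\to\infty$ (otherwise a subsequence converges in the closed set $X\cap P$ to a point of infinite $c$-value), so along a subsequence $d:=\lim(x_{k}-\bar x)/\|x_{k}-\bar x\|$ is a unit vector, and since $X$ and $P$ are closed and convex, $d\in\rec(X\cap P)=\rec(X)\cap\rec(P)$ with $c^{T}d\ge 0$. If $c^{T}d>0$ we are done by the previous paragraph. The difficulty is the case $c^{T}d=0$: here the unboundedness of $c^{T}x$ over $X\cap P$ is not certified by any recession direction (the ``parabola'' phenomenon, e.g.\ $X\cap P=\{(x,y):y\ge x^{2}\}$ with $c=(1,0)$), and the bulk of the work lies in this case.

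For the case $c^{T}d=0$ I would reduce to the finiteness property for full-dimensional convex sets, Proposition 4.5 of \cite{Moran}. If $\int(X\cap P)$ contains a point of $\MM$, that proposition applies to the closed convex set $X\cap P$ (using $\sup_{X\cap P}c^{T}x=+\infty$) and gives $\sup_{X\cap S}c^{T}x=+\infty$. When $\int(X\cap P)\cap\MM=\emptyset$ --- which, since $\bar x\in X\cap P$ has integer first $n_1$ coordinates, forces the integer part of $\bar x$ to lie on the relative boundary of the projection of $X\cap P$ onto the first $n_1$ coordinates --- one passes to a lower-dimensional ambient space: restrict to the affine hull of $X\cap S$ (equivalently, pin the tight integer coordinates), check --- using that $P$ is Dirichlet --- that $\MM$ restricts there to a genuine mixed-lattice and that $c^{T}x$ remains unbounded on the restricted intersection, and iterate; after finitely many steps the interior of the restricted intersection contains a lattice point, and Proposition 4.5 finishes the argument. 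I expect this reduction to be the main technical obstacle: the delicate point is showing that the Dirichlet property guarantees \emph{both} that the lattice structure survives each restriction \emph{and} that the unbounded behaviour of $X\cap P$ persists after it (intuitively, that the Dirichlet hypothesis forces the ``escape'' direction of the parabola, like the recession direction $d$, to be parallel to $\aff(S)$). Everything else --- the easy direction, the tube lemma, the extraction of $d$, and the whole case $c^{T}d>0$ --- is routine.
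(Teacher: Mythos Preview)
Your argument for the case $c^{T}d>0$ is essentially correct (and the ``tube lemma'' is in fact immediate: a ball $B(\bar x,\delta)\subseteq X$ exists because $\bar x\in\int(X)$, and then $B(\bar x,\delta)+\rr_{\ge0}d\subseteq X$ by definition of the recession cone --- no supporting-hyperplane argument is needed). The genuine gap is the case $c^{T}d=0$. There you only give a plan: restrict to $\aff(X\cap S)$, check that the mixed-lattice and the Dirichlet property survive the restriction, check that $\sup_{X\cap P}c^{T}x=+\infty$ persists on the restricted set, and iterate until Proposition~4.5 of \cite{Moran} applies. None of these steps is carried out, and the second one is exactly the heart of the matter: why should the unbounded behaviour of $c^{T}x$ on $X\cap P$ be confined to $\aff(X\cap S)$? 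This is precisely where the Dirichlet hypothesis on $P$ has to do nontrivial work, and you have not shown how. (You also add the hypothesis that $P$ is closed, which the statement does not assume; without it the extraction $d\in\rec(P)$ from a divergent sequence in $X\cap P$ is not justified.)

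The paper's proof avoids the $c^{T}d>0$ versus $c^{T}d=0$ dichotomy altogether by passing through maximal $S$-free convex sets. Setting $d=\sup_{X\cap S}c^{T}x$, the set $X^{\ge}=\{x\in X:c^{T}x\ge d\}$ is full-dimensional and $S$-free, hence contained in a maximal $S$-free \emph{polyhedron} $Q$. The single place the Dirichlet hypothesis is used is Lemma~\ref{dprop}: for any maximal $S$-free convex set $Q$, every $r\in\rec(P\cap Q)$ satisfies $-r\in\rec(Q)$. This forces $(X^{\ge}\cap P)+L\subseteq Q$ where $L$ is the linear span of $\rec(X^{\ge}\cap P)$, so that a single facet inequality $a^{T}x\le b$ of $Q$ violated by some $x_{0}\in X\setminus Q$ is, up to a fixed slack $\eta$, also satisfied in the opposite direction on $X^{\ge}\cap P$. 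An explicit convex combination of $x_{0}$ with points $x_{k}\in X^{\ge}\cap P$ having $c^{T}x_{k}\to+\infty$ then produces a point of $X^{\ge}\subseteq Q$ violating $a^{T}x\le b$, a contradiction. This argument treats the ``parabola'' phenomenon uniformly with the recession case; the Dirichlet property enters only once, through Lemma~\ref{dprop}, rather than through an iterated dimension reduction.
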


{\blue Theorem~\ref{Dfiniteness} extends the result in \cite{Moran} from the case $\MM=\zz^{n_1}\times \rr^{n_1}$ and $P=\rr^n$ to arbitrary mixed-integer lattices $\MM$ and Dirichlet convex sets $P$.}

%%%%%%%%%%%%%%%%%%%%%%%%%%%%%%%%%%%%%%%%%%%%%%%%%%%%%%%%%%%%%%%%%%%%%%%%%%%%%%%%%%%%%%%%%%%%%%%%%%%%%%%%%%%%%%%%%%%
%%%%%%%%%%%%%%%%%%%%%%%%%%%%%%%%%%%%%%%%%%%%%%%%%%%%%%%%%%%%%%%%%%%%%%%%%%%%%%%%%%%%%%%%%%%%%%%%%%%%%%%%%%%%%%%%%%
%%%%%%%%%%%%%%%%%%%%%%%%%%%%%%%%%%%%%%%%%%%%%%%%%%%%%%%%%%%%%%%%%%%%%%%%%%%%%%%%%%%%%%%%%%%%%%%%%%%%%%%%%%%%%%%%%%%
\subsection{Some natural sufficient conditions that imply dual feasibility}\label{section:result3}
%%%%%%%%%%%%%%%%%%%%%%%%%%%%%%%%%%%%%%%%%%%%%%%%%%%%%%%%%%%%%%%%%%%%%%%%%%%%%%%%%%%%%%%%%%%%%%%%%%%%%%%%%%%%%%%%%%%

Consider the following conic MIP

\begin{equation}\label{eq:ESF}
\begin{aligned}
{\black z^{*}} := \inf  &\hspace{0.5em}  c^Tx + d^Ty\\
\mathrm{s.t.}   &\hspace{0.5em} A_1 x + G_1y \succeq_{K_1} b_1 \\
&\hspace{0.5em} A_2x + G_2y \succeq_{K_2} b_2 \\
& \hspace{0.5em}  x  \in   \mathbb{Z}^{n_1}, y \in   \mathbb{R}^{n_2},
\end{aligned}
\end{equation}
where  $c \in \RR^{n_1}$,  $d \in \RR^{n_2}$,  and {\blue for $i=1,2$}, $K_i \subseteq \RR^{m_i}$ is a regular cone,   $A_i \in \RR^{m_i \times n_1}$, $G_i \in \RR^{m_i \times n_2}$, $b_i \in \RR^{m_i}$.

{\damr Let us} denote $S_i=\{(x,y)\in \rr^{n_1}\times\rr^{n_2}\tq A_ix + G_iy \succeq_{K_i} b_i\}$, for $i=1,2$.
As a consequence of {\blue Theorem~\ref{sdual_cp}} 
%wrong: \ref{Theorem:Bounded},
and Theorem~\ref{Dfiniteness}, we obtain the following result.

\begin{thm}\label{TheoremESF}
Suppose that the optimal value of~\eqref{eq:ESF} is finite, $\int(S_1)\cap S_2\cap (\zz^{n_1}  \times \rr^{n_2})\neq \emptyset$ and one of the following conditions is satisfied: 
\begin{enumerate}
	\item[{\bf A.}] The set $S_2$ is bounded.
	\item[{\bf B.}] The set $S_2$ is a rational polyhedron.
\end{enumerate}
Then, the {conic} dual of the continuous relaxation of~\eqref{eq:ESF} is feasible. 
\end{thm}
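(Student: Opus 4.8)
The plan is to reduce Theorem~\ref{TheoremESF} to an application of Theorem~\ref{Dfiniteness} followed by a standard conic-duality argument (Theorem~\ref{sdual_cp}). Write the continuous relaxation of~\eqref{eq:ESF} as $\inf\{c^Tx+d^Ty : (x,y)\in S_1\cap S_2\}$. Its conic dual is feasible precisely when there exist multipliers $(y_1,u_1,y_2,u_2)$ (with the appropriate sign/cone constraints) satisfying the transpose equations $A_1^Ty_1+A_2^Ty_2=c$, $G_1^Ty_1+G_2^Ty_2=d$ — equivalently, by the weak/strong duality dichotomy for conic programs, when the continuous relaxation has a \emph{finite} optimal value \emph{and} some constraint qualification in Theorem~\ref{sdual_cp} applies. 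So the real content is: (i) show the continuous relaxation $\inf_{S_1\cap S_2}(c^Tx+d^Ty)$ is bounded below, and (ii) invoke part~2 of Theorem~\ref{sdual_cp} to conclude dual feasibility (indeed dual solvability).

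For step (i), I would apply Theorem~\ref{Dfiniteness} with the identifications $\MM=\zz^{n_1}\times\rr^{n_2}$, $X=S_1$ (a closed convex set), and $P=S_2$, taking the objective as the linear function $(x,y)\mapsto c^Tx+d^Ty$ (for the $\sup/\inf$ matching in Theorem~\ref{Dfiniteness}, replace $c$ by its negative). The hypothesis $\int(S_1)\cap S_2\cap(\zz^{n_1}\times\rr^{n_2})\neq\emptyset$ gives exactly $\int(X)\cap(P\cap\MM)\neq\emptyset$, i.e. $\int(X)\cap S\neq\emptyset$ in the notation of Theorem~\ref{Dfiniteness}. The assumption that~\eqref{eq:ESF} has finite optimal value says $\inf_{S_1\cap S_2\cap\MM}(c^Tx+d^Ty)>-\infty$, which is the left-hand side of the equivalence in Theorem~\ref{Dfiniteness}. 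Hence the right-hand side holds: $\inf_{S_1\cap S_2}(c^Tx+d^Ty)>-\infty$. The one prerequisite to discharge here is that $S_2$ is a Dirichlet convex set with respect to $\zz^{n_1}\times\rr^{n_2}$ — and this is exactly where conditions~{\bf A} and~{\bf B} enter: by the results announced after the definition of Dirichlet convex sets (Proposition~\ref{Dsetsprop} and Corollary~\ref{corProp2}), bounded sets and rational polyhedra are Dirichlet convex with respect to $\zz^{n_1}\times\rr^{n_2}$, so in either case $P=S_2$ qualifies.

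For step (ii), note that the feasible region of the continuous relaxation is $S_1\cap S_2$, and $\int(S_1)\cap S_2\neq\emptyset$ (weaker than the stated mixed-integer hypothesis) means there is a point $(\hat x,\hat y)$ with $A_1\hat x+G_1\hat y\succ_{K_1}b_1$ and $A_2\hat x+G_2\hat y\succeq_{K_2}b_2$; combining the two blocks into a single conic constraint with cone $K_1$ in the ``strict'' role and $K_2$ in the ``inequality'' role, this is precisely an essentially strictly feasible point in the sense of condition~{\bf (b.)} of Theorem~\ref{sdual_cp}. (Under condition~{\bf A}, the feasible region $S_1\cap S_2$ is bounded, so alternatively condition~{\bf (a.)} applies.) Either way, Theorem~\ref{sdual_cp} yields strong duality for the continuous relaxation; since step (i) gives a finite primal optimum, the conic dual is solvable and in particular feasible, which is the claim.

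The main obstacle is not in the logical skeleton — that is essentially bookkeeping — but in making sure the hypotheses of Theorem~\ref{Dfiniteness} are met cleanly, specifically verifying that $S_2$ is Dirichlet convex. That reduces entirely to the classification results for Dirichlet convex sets (Proposition~\ref{Dsetsprop}/Corollary~\ref{corProp2}), so the proof of Theorem~\ref{TheoremESF} is short once those are in hand; the substantive work lives in Theorem~\ref{Dfiniteness} and in the Dirichlet-set classification, both of which I am allowed to assume here. A minor technical point to be careful about: Theorem~\ref{Dfiniteness} is stated for $\sup$ of $c^Tx$, so I must consistently translate the $\inf$ of $c^Tx+d^Ty$ into that form, and I should double-check that the ``finite optimal value of~\eqref{eq:ESF}'' hypothesis already presupposes feasibility of~\eqref{eq:ESF} (otherwise $S_1\cap S_2\cap\MM=\emptyset$ and the statement about the continuous dual would need the separate observation that an infeasible conic system still has the ``$=c$'' equations solvable only under a Farkas-type alternative — but the finiteness hypothesis rules this out).
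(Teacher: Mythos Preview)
Your proposal is correct and follows essentially the same route as the paper: verify $S_2$ is Dirichlet convex via Corollary~\ref{corProp2}, apply Theorem~\ref{Dfiniteness} with $X=S_1$, $P=S_2$, $\MM=\zz^{n_1}\times\rr^{n_2}$ to get finiteness of the continuous relaxation, then invoke Theorem~\ref{sdual_cp} (condition~(a.) under~{\bf A}, condition~(b.) under~{\bf B}) to conclude dual solvability and hence feasibility. Your cautionary remarks about the $\sup/\inf$ translation and the implicit feasibility hypothesis are appropriate but routine.
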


Note that by Corollary~\ref{Cor:Bounded}, the assumptions of Theorem~\ref{TheoremESF} also imply that the subadditive dual of~\eqref{eq:ESF} is a strong dual.

The assumptions in Theorem~\ref{TheoremESF} include the following special cases of `natural' sufficient conditions for strong duality of the subadditive dual: 
\begin{enumerate}
\item 
The conic MIP has  a bounded feasible region.
\item 
The conic MIP is a linear MIP defined by rational data.
\item
The conic MIP satisfies the mixed-integer strict feasibility condition in Theorem~\ref{Theorem} \cite{Moran}.
\item
The conic MIP set is mixed-integer essentially  strictly feasible, that is, there exists a mixed-integer feasible point  that satisfies the nonlinear conic constraints strictly. 
\item
The conic MIP includes binary variables and either satisfies (i) or (iv). 

\end{enumerate}

Indeed, when {\bf A.} holds we obtain {\damr Condition} (i) and when {\bf B.} holds we obtain {\damr Condition} (iv). Conditions (ii) and (iii) can be seen as the special cases of  {\damr Condition} (iv) that we obtain when $S_1=\rr^{n_1}\times\rr^{n_2}$  and $S_2=\rr^{n_1}\times\rr^{n_2}$, respectively. Finally,  {\damr Condition} (v) is a special case of  {\damr Condition} (i) or  {\damr Condition} (iv).

We note here that Conditions (ii) and (iii) were already known in the literature and that, to the best of our knowledge, Conditions (i), (iv) and (v) are new. 
%These special cases are discussed in detail in Section~\ref{sec:proof3}.

%%%%%%%%%%%%%%%%%%%%%%%%%%%%%%%%%%%%%%%%%%%%%%%%%%%%%%%%%%%%%%%%%%%%%%%%%%%%%%%%%%%%%%%%%%%%%%%%%%%%%%%%%%%%%%%%%%%
%%%%%%%%%%%%%%%%%%%%%%%%%%%%%%%%%%%%%%%%%%%%%%%%%%%%%%%%%%%%%%%%%%%%%%%%%%%%%%%%%%%%%%%%%%%%%%%%%%%%%%%%%%%%%%%%%%%
%%%%%%%%%%%%%%%%%%%%%%%%%%%%%%%%%%%%%%%%%%%%%%%%%%%%%%%%%%%%%%%%%%%%%%%%%%%%%%%%%%%%%%%%%%%%%%%%%%%%%%%%%%%%%%%%%%%
\section{Proof of Theorem~\ref{Theorem:Bounded} and some examples}
\label{sec:proof1}
%\section{A new sufficient condition for strong duality}
%%%%%%%%%%%%%%%%%%%%%%%%%%%%%%%%%%%%%%%%%%%%%%%%%%%%%%%%%%%%%%%%%%%%%%%%%%%%%%%%%%%%%%%%%%%%%%%%%%%%%%%%%%%%%%%%%%%
%%%%%%%%%%%%%%%%%%%%%%%%%%%%%%%%%%%%%%%%%%%%%%%%%%%%%%%%%%%%%%%%%%%%%%%%%%%%%%%%%%%%%%%%%%%%%%%%%%%%%%%%%%%%%%%%%%%
%%%%%%%%%%%%%%%%%%%%%%%%%%%%%%%%%%%%%%%%%%%%%%%%%%%%%%%%%%%%%%%%%%%%%%%%%%%%%%%%%%%%%%%%%%%%%%%%%%%%%%%%%%%%%%%%%%%

%In this section we give a proof for Theorem~\ref{Theorem:Bounded}. We first show in Section~\ref{sec:binary prob} an extension of Theorem~\ref{Theorem} (see Proposition~\ref{Theorem:Binary}). Next, in Section~\ref{sec:bounded} we use a technical lemma and Proposition~\ref{Theorem:Binary} 

In this section, we first show an extension of Theorem~\ref{Theorem} (see Proposition~\ref{Theorem:Binary}) and use this result to give a proof for Theorem~\ref{Theorem:Bounded}. Then, we illustrate the use of our results by {\black giving} two examples.

%%%%%%%%%%%%%%%%%%%%%%%%%%%%%%%%%%%%%%%%%%%%%%%%%%%%%%%%%%%%%%%%%%%%%%%%%%%%%%%%%%%%%%%%%%%%%%%%%%%%%%%%%%%%%%%%%%%

{\blue
\subsection{Strong duality for conic MIPs with binary variables}
\label{sec:binary prob}
%%%%%%%%%%%%%%%%%%%%%%%%%%%%%%%%%%%%%%%%%%%%%%%%%%%%%%%%%%%%%%%%%%%%%%%%%%%%%%%%%%%%%%%%%%%%%%%%%%%%%%%%%%%%%%%%%%%

In this section, we consider the following  conic MIP,
\begin{equation}\label{eq:binary prob}
	\begin{aligned}
	z^* = \inf  &\hspace{0.5em}  c^Tx + d^Ty  + h^T w \\
	\mathrm{s.t.}   &\hspace{0.5em} A  x + G y + H w \succeq_K b    \\
	& \hspace{0.5em}  x  \in   \mathbb{Z}^{n_1}  ,  y \in   \mathbb{R}^{n_2} , w \in \{0,1\}^{n_3} ,
	\end{aligned}
\end{equation}
where $K \subseteq \RR^m$ is a regular cone,  $c \in \RR^{n_1}$, $d \in \RR^{n_2}$, $h \in \RR^{n_3}$, $A \in \RR^{m \times n_1}$, $G \in \RR^{m \times n_2}$, $H \in \RR^{m \times n_3}$ and $b \in \RR^{m}$.

We show that  the subadditive dual of \eqref{eq:binary prob} is a strong dual when we require  $z^*>-\infty$ and strict feasibility of the conic constraints. %defined by the regular cone $K$. 
%%%\DM{this is the same $z^*$ we use for the conic MIP (1). Should we change it?} \textcolor{red}{[BK] I think the current form is alright, but if you like, we can change it to something like $z_B^*$ ?}

\begin{prop} \label{Theorem:Binary}
If $z^*>-\infty$ and there exists $(\hat x, \hat y, \hat w) \in   \mathbb{Z}^{n_1}  \times \mathbb{R}^{n_2} \times  \{0,1\}^{n_3} $ such that  $A\hat x + G \hat y + H \hat w \succ_K b$, then the subadditive dual of~\eqref{eq:binary prob}, {\black that is,}
\begin{equation}\label{eq:binary dualprob}
\begin{aligned}
 \rho^* =  \sup  &\hspace{0.5em} F(b, 0, -e) \\
  \mathrm{s.t.}   &\hspace{0.5em} F(A^j,0,0) = -F(-A^j, 0,0)  = c_j &j=1,\dots,n_1  \\
&\hspace{0.5em} \bar F(G^j, 0,0) = -\bar F(-G^j,0,0)  = d_j &j=1,\dots,n_2  \\
 &\hspace{0.5em} F(H^j, e^j,-e^j) = -F(-H^j, -e^j,e^j)  = h_j &j=1,\dots,n_3  \\
  & \hspace{0.5em}  F(0,0,0) = 0 \\
  & \hspace{0.5em}  F \in \mathcal{F}_{K\times \mathbb{R}_+^{n_3}\times \mathbb{R}_+^{n_3}},
\end{aligned}
\end{equation}
is a strong dual.  {\blue Here, vectors $e$ and $e^j$ respectively denote the vector of ones and the $j^{\text{th}}$ unit vector in $\mathbb{R}^{n_3}$.}

%where $\mathcal{F}_{K\times \mathbb{R}_+^{m_2}}$ is the set of subadditive, non-decreasing functions with respect to the cone $K\times\mathbb{R}_+^{m_2}$.
\end{prop}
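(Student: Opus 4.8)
The plan is to reduce Proposition~\ref{Theorem:Binary} to Theorem~\ref{Theorem} by a change of variables that converts the binary constraints $x \in \{0,1\}^{n_1}$ into general integer constraints together with additional conic (in fact, linear) inequalities, chosen so that the resulting reformulation is \emph{strictly} feasible in the sense required by Theorem~\ref{Theorem}. Concretely, I would rewrite the binary restriction $x_j \in \{0,1\}$ as $x_j \in \zz$ together with the two inequalities $x_j \ge 0$ and $1 - x_j \ge 0$, i.e. replace~\eqref{eq:binary prob} by the conic MIP
\begin{equation*}
\inf\left\{ c^Tx + d^Ty \;:\; Ax + Gy \succeq_K b,\; I x \succeq_{\rr_+^{n_1}} 0,\; -Ix \succeq_{\rr_+^{n_1}} -e,\; x\in\zz^{n_1},\, y\in\rr^{n_2}\right\},
\end{equation*}
whose constraint cone is $K \times \rr_+^{n_1} \times \rr_+^{n_1}$, which is regular. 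This problem has the same feasible region, the same objective, and hence the same optimal value $z^*>-\infty$ as~\eqref{eq:binary prob}; moreover its subadditive dual is exactly~\eqref{eq:binary dualprob}, since the columns of the enlarged constraint matrix corresponding to $x_j$ are $(A^j, e^j, -e^j)$ and those corresponding to $y_j$ are $(G^j, 0, 0)$, the right-hand side is $(b, 0, -e)$, and the functional $F$ ranges over $\mathcal{F}_{K\times\rr_+^{n_1}\times\rr_+^{n_1}}$.

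The key point is then to check that the hypothesis of Theorem~\ref{Theorem} holds for this reformulation, i.e. that there is a mixed-integer point strictly satisfying \emph{all} the conic constraints. But the point $(\hat x,\hat y)$ from the hypothesis has $\hat x \in \{0,1\}^{n_1}$, so $\hat x$ lies on the boundary of the box $[0,1]^{n_1}$ and the inequalities $I\hat x \ge 0$, $-I\hat x \ge -e$ are satisfied with equality, not strictly. So a naive application fails, and this is the main obstacle. I would resolve it exactly as is done for strictly feasible CPs with equality constraints: instead of invoking Theorem~\ref{Theorem} directly, I would either (a) project out the degenerate directions, or (b) observe that Theorem~\ref{Theorem} is really the special case $A_2=0,b_2=0$ of a statement allowing additional linear equality constraints, and that the essentially-strictly-feasible version (condition (b.) of Theorem~\ref{sdual_cp}, mixed-integer analogue) is what is actually needed. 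Since the cleanest route available from the excerpt is Theorem~\ref{Theorem} itself, I would instead fix the binary variables: for each of the finitely many $\bar x \in \{0,1\}^{n_1}$ for which the slice $\{y : A\bar x + Gy \succ_K b\}$ is nonempty, consider the pure continuous conic program in $y$ obtained by substituting $x=\bar x$; each such slice problem is strictly feasible, hence has a strong conic dual by Theorem~\ref{sdual_cp}(b.), producing a dual multiplier $\lambda_{\bar x}\in K_*$. The finitely many slices that are feasible include one attaining (or approaching) $z^*$, and $z^*>-\infty$ ensures the relevant slice value is finite.

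Finally I would assemble these slice-dual multipliers into a single subadditive function $F$ feasible for~\eqref{eq:binary dualprob}. The construction is the standard Gomory-type value-function / pointwise-minimum construction: define $F$ so that its value on the data vector $(b,0,-e)$ equals $\min_{\bar x} \{(\text{slice-}\bar x\text{ dual value})\}$, using the extra coordinates $(\cdot,0,-e)$ precisely to let $F$ ``know'' the fixed value $\bar x$ of the integer variables and pick out the correct slice; one checks $F\in\mathcal{F}_{K\times\rr_+^{n_1}\times\rr_+^{n_1}}$, that it satisfies the column equalities and the $\bar F$-conditions on the $G$-columns, and that $F(b,0,-e)=z^*$, giving both zero gap and solvability. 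The verification that this $F$ is subadditive, non-decreasing, and satisfies the equality constraints is the routine-but-careful part; the conceptual content is the slicing argument plus the recognition that the auxiliary box inequalities make the reformulated problem fit (after slicing) the strictly-feasible framework of Theorem~\ref{Theorem}/Theorem~\ref{sdual_cp}. I expect the main obstacle to be handling the loss of strict feasibility caused by the binary bounds, which is exactly why the slicing over the finite set $\{0,1\}^{n_1}$ — rather than a single global strictly feasible point — is needed.
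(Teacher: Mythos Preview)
Your reformulation of~\eqref{eq:binary prob} with the cone $K\times\rr_+^{n_1}\times\rr_+^{n_1}$ is correct, and you correctly identify the obstacle: the point $(\hat x,\hat y)$ sits on the boundary of the box $[0,1]^{n_1}$, so Theorem~\ref{Theorem} does not apply directly. However, your proposed workaround --- slicing over $\bar x\in\{0,1\}^{n_1}$ and invoking conic strong duality on each slice --- has a genuine gap. You only handle slices for which $\{y:A\bar x+Gy\succ_K b\}\neq\emptyset$, but nothing in the hypotheses guarantees that the slice \emph{achieving} $z^*$ is strictly feasible; the assumption gives strict feasibility only for $\bar x=\hat x$. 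A slice that is feasible but not strictly feasible may have a positive conic duality gap, so its dual multiplier will not certify the true slice value, and your assembled $F$ could fall short of $z^*$. The assembly step is also left as ``routine-but-careful,'' but building a single subadditive, nondecreasing $F$ meeting all the column equalities from slice multipliers is precisely the non-obvious part, and you give no construction.

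The paper's trick is much simpler and sidesteps both issues: instead of the box $[0,1]^{n_1}$, use the enlarged box $[-\epsilon,1+\epsilon]^{n_1}$ for some $\epsilon\in(0,1)$, i.e.\ impose $x\ge -\epsilon e$ and $-x\ge -(1+\epsilon)e$. Over $\zz^{n_1}$ this enlarged box still forces $x\in\{0,1\}^{n_1}$, so the perturbed problem has the same feasible region and optimal value $z'=z^*$. Now $(\hat x,\hat y)$ satisfies \emph{all} constraints strictly (the conic one by hypothesis, the box ones because $\hat x\in\{0,1\}^{n_1}$ lies in the interior of $[-\epsilon,1+\epsilon]^{n_1}$), so Theorem~\ref{Theorem} applies to the perturbed problem and yields an optimal $f'\in\mathcal{F}_{K\times\rr_+^{n_1}\times\rr_+^{n_1}}$ with $f'(b,-\epsilon e,-(1+\epsilon)e)=z^*$. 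The feasible region of the perturbed subadditive dual coincides with that of~\eqref{eq:binary dualprob} (only the right-hand side changed), so $f'$ is feasible for~\eqref{eq:binary dualprob}, and monotonicity gives $f'(b,0,-e)\ge f'(b,-\epsilon e,-(1+\epsilon)e)=z^*$. Combined with weak duality this yields $\rho^*=z^*$ and solvability. You were close when you mentioned the ``essentially strictly feasible'' idea, but the concrete realization --- enlarging the box by $\epsilon$ so integrality does the rounding --- is the missing ingredient.
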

\begin{proof}
First of all, by construction,~\eqref{eq:binary dualprob} is a weak dual for~\eqref{eq:binary prob} (see \cite{Moran}). Hence, we have $\rho^* \le z^*$.

 Let $\epsilon \in (0,1)$ and consider the ``perturbed" primal problem:
\begin{equation}\label{eq:binary pertprob}
\begin{aligned} 
 z' = \inf   &\hspace{0.5em}  c^Tx + d^Ty + h^T w\\
  \mathrm{s.t.}   &\hspace{0.5em} A  x + G y + H w \succeq_K b    \\
  & \hspace{0.5em}   w  \ge  {-\epsilon e}   \\
  & \hspace{-0.56em}   -w  \ge  {-(1+\epsilon) e}   \\
	& \hspace{0.5em}  x  \in   \mathbb{Z}^{n_1}  ,  y \in   \mathbb{R}^{n_2} , w \in \mathbb{Z}^{n_3} .
\end{aligned}
\end{equation}

Due to the choice of $\epsilon$, the feasible regions of~\eqref{eq:binary prob} and~\eqref{eq:binary pertprob} coincide. Thus, we obtain that $z'=z^*$. We also note that point   $(\hat x, \hat y, \hat w) \in   \mathbb{Z}^{n_1}  \times \mathbb{R}^{n_2} \times  \{0,1\}^{n_3} $ satisfies all the constraints of~\eqref{eq:binary pertprob} strictly. Therefore, we can apply Theorem~\ref{Theorem} to~\eqref{eq:binary pertprob} and its subadditive dual
\begin{equation}\label{eq:binary dualpertprob}
\begin{aligned}
 \rho' =  \sup  &\hspace{0.5em} f(b,  {-\epsilon e},  {-(1+\epsilon)e}) \\
  \mathrm{s.t.}   &\hspace{0.5em} f(A^j, 0,0) = -f(-A^j, 0,0)  = c_j   &j=1,\dots,n_1 \\
&\hspace{0.5em} \bar f(G^j, 0,0) = -\bar f(-G^j,0,0)  = d_j   &j=1,\dots,n_2 \\
  &\hspace{0.5em} f(H^j, e^j,-e^j) = -f(-H^j, -e^j,e^j)  = h_j   &j=1,\dots,n_3 \\
  & \hspace{0.5em}  f(0,0,0) = 0 \\
  & \hspace{0.5em}  f \in \mathcal{F}_{K\times \mathbb{R}_+^{n_3}\times \mathbb{R}_+^{n_3}}.
\end{aligned}
\end{equation}
Since $z'=z^*>-\infty$, we conclude that there exists a function $f':\rr^m\times\rr^{n_3}\times\rr^{n_3}\rightarrow\rr$ that is an optimal solution to~\eqref{eq:binary dualpertprob} and such that $f'(b,  {-\epsilon e},  {-(1+\epsilon)e})=\rho'=z'$. Since the feasible regions of~\eqref{eq:binary dualprob} and~\eqref{eq:binary dualpertprob} are the same, the function defined as $f'$ is also a feasible solution to~\eqref{eq:binary dualprob}. Also, we have 
\[
\rho^* \ge f'(b,0,-e) \ge f'(b,  {-\epsilon e},  {-(1+\epsilon)e}) = \rho'=z'=z^*,
\]
where the first inequality follows since $f'$ is a feasible solution to~\eqref{eq:binary dualprob}, and the last inequality follows due to the fact that $f'$ is a non-decreasing function with respect to $K\times \mathbb{R}_+^{n_3}\times \mathbb{R}_+^{n_3}$. Combining the previous facts, we conclude that $z^* = \rho^*$ and~\eqref{eq:binary dualprob} is solvable, proving that it is a strong dual.
\end{proof}
}

%%%%%%%%%%%%%%%%%%%%%%%%%%%%%%%%%%%%%%%%%%%%%%%%%%%%%%%%%%%%%%%%%%%%%%%%%%%%%%%%%%%%%%%%%%%%%%%%%%%%%%%%%%%%%%%%%%%
\subsection{Strong duality for conic MIPs with feasible subadditive dual}
\label{sec:bounded}
%%%%%%%%%%%%%%%%%%%%%%%%%%%%%%%%%%%%%%%%%%%%%%%%%%%%%%%%%%%%%%%%%%%%%%%%%%%%%%%%%%%%%%%%%%%%%%%%%%%%%%%%%%%%%%%%%%%

%\subsubsection{Two Lemmas}

Before giving the proof of Theorem~\ref{Theorem:Bounded} we introduce some notation. For any $u\in \rr^m$ we define
\begin{equation*}
\vartheta^*_{MIP}(u) :=  \inf \big \{ c^Tx + d^Ty \tq A  x  + G  y  \succeq_K  { u }, x  \in   \mathbb{Z}^{n_1}  ,  y \in   \mathbb{R}^{n_2} \big \}.
\end{equation*}
The function $\vartheta^*_{MIP}:\rr^m\rightarrow \rr \cup\{-\infty,+\infty\}$ is the value function of~\eqref{eq:generic} and, in particular, $\vartheta^*_{MIP}(b)=z^*$, $\vartheta^*_{MIP}(u)=-\infty$ when the objective function of the conic MIP is not bounded below and $\vartheta^*_{MIP}(u)=+\infty$ when the conic MIP is infeasible.

\begin{proof}[Proof of Theorem~\ref{Theorem:Bounded}] \mbox{}
\begin{enumerate} 
	\item {First of all}, by construction,~\eqref{eq:dualgeneric} is a weak dual for~\eqref{eq:generic} (\cite{Moran}). Hence, we have $\rho^* \le z^*$. Also, observe that $z^*>-\infty$, as the dual~\eqref{eq:dualgeneric} is feasible and  any dual feasible solution provides a lower bound for the optimal value of the primal~\eqref{eq:generic}.%Below, we prove the reverse direction.

Now, let $(\hat x, \hat y)\in\zz^{n_1}\times\rr^{n_2}$ be a feasible solution to the primal~\eqref{eq:generic} and let  $v \in K$ such that $A  \hat x  + G  \hat y + v \succ_K  { b} $ (for instance, any $v\succ_K0$ satisfies this condition). Since $v\in K$, we have that any feasible point for the primal~\eqref{eq:generic} is also feasible for the primal with right-hand side $b-v$ (as $b\succeq_{K} b-v$), and thus $\vartheta^*_{MIP}(b-v)\leq \vartheta^*_{MIP}(b)=z^*$. On the other hand, since the subadditive dual~\eqref{eq:dualgeneric} is feasible, we obtain that the subadditive dual  for the primal problem with right-hand side $b-v$ is also feasible (as feasibility of the subadditive dual does not depend on the right-hand side of the primal), and hence, by weak duality we have $\vartheta^*_{MIP}(b-v)>-\infty$. 

 %Now define $M=z^*(b)-z^*(b-v)$ and 
 Consider the ``perturbed" primal problem
\begin{equation} \label{eq:Mpertgeneric}
\begin{aligned} 
 z' = \inf   &\hspace{0.5em}  c^Tx + d^Ty + [\vartheta^*_{MIP}(b)-\vartheta^*_{MIP}(b-v)] w\\
  \mathrm{s.t.}   &\hspace{0.5em} A  x + G y + v w\succeq_K b  \\
  & \hspace{0.5em}  x  \in   \mathbb{Z}^{n_1}  ,  y \in   \mathbb{R}^{n_2}, w \in \{0,1\}.
\end{aligned}
\end{equation}
We have that $z'=\min\{\vartheta^*_{MIP}(b)+0,\vartheta^*_{MIP}(b-v)+(\vartheta^*_{MIP}(b)-\vartheta^*_{MIP}(b-v))\}=z^*$. 
%\begin{align*}
%z'&=\min\{z^*(b)+0,z^*(b-v)+(z^*(b)-z^*(b-v))\}\\
%&=z^* 
%\end{align*}
We also note that the vector  $(\hat x, \hat y, 1)\in\zz^{n_1}\times\rr^{n_2}\times \zz$ satisfies  the conic constraint strictly, hence, we can apply Proposition~\ref{Theorem:Binary} to~\eqref{eq:Mpertgeneric} and its subadditive dual
\begin{equation} \label{eq:dualMpertgeneric}
\begin{aligned}
\rho' = \sup  &\hspace{0.25em} f({b}, {0}, {-1}) \\
  \mathrm{s.t.}   &\hspace{0.25em} f(A^j, {0}, {0}) = -f(-A^j, {0}, {0})  = c_j  &j=1,\dots,n_1   \\
&\hspace{0.25em} \bar f(G^j, {0}, {0}) = -\bar f(-G^j, {0}, {0})  = d_j  &j=1,\dots,n_2\\
&\hspace{0.25em}  { f(v,0,-1) = -f(-v,0,1)  = \vartheta^*_{MIP}(b)-\vartheta^*_{MIP}(b-v) } \,  \\
  & \hspace{0.25em}  f(0, {0}, {0}) = 0 \\
  & \hspace{0.25em}  f \in \mathcal{F}_{K\times  {\mathbb{R}_+}\times {\mathbb{R}_+}}.
\end{aligned}
\end{equation}
Since $z'=z^*>-\infty$, we conclude that there exists a function $f':\rr^m\times\rr\times\rr\rightarrow\rr$ that is an optimal solution to~\eqref{eq:dualMpertgeneric} and such that $f({b}, {0}, {-1})=\rho'=z'$. It is easy to show that  the function $F:\rr^m\rightarrow\rr$ defined as $F(u):=f'(u,0,0)$ is a feasible solution to~\eqref{eq:dualgeneric}. Furthermore, we have 
\[
\rho^* \ge F(b) = f'(b,0,0) \ge f'(b,  0,-1) = \rho'=z'=z^*,
\]
where the first inequality follows since $F$ is a feasible solution to~\eqref{eq:dualgeneric}, and the last inequality follows due to the fact that $f'$ is a non-decreasing function with respect to $K\times  {\mathbb{R}_+}\times {\mathbb{R}_+}$. Combining the previous facts, we conclude that $z^* = \rho^*$ and~\eqref{eq:dualgeneric} is solvable, proving that it is a strong dual.

{\damr \item  
For any $u\in \rr^m$, we define the conic program  
\begin{equation*}%\label{eq:perturbedConic}
CP(u): \   \inf \big \{ c^Tx + d^Ty \tq A  x  + G  y  \succeq_K  { u }, x  \in   \mathbb{R}^{n_1}  ,  y \in   \mathbb{R}^{n_2}  \},
\end{equation*}
and denote  its value function as $\vartheta_{CP}^*(u)$. Note that $CP(b)$ corresponds to the continuous relaxation of~\eqref{eq:generic}.

We start by showing that feasibility of the conic dual of $CP(b)$ implies that the subadditive dual is feasible. Indeed, for any $\lambda$ feasible for the conic dual of  $CP(b)$, it is easy to check that the function $f:\rr^m\rightarrow\rr$ defined by $f(d)=\lambda^Td$ is feasible for~\eqref{eq:dualgeneric}.

Now we show that feasibility of the subadditive dual~\eqref{eq:dualgeneric} implies that the conic dual of $CP(b)$ is feasible. 
Let $v \succ_K 0$.
Since the subadditive dual~\eqref{eq:dualgeneric} is feasible {(independent of the right-hand side of the primal~\eqref{eq:generic})}, we have that 
$ \vartheta^*_{MIP}(b-v) > -\infty $. Note that  $CP(b-v)$ is strictly feasible % for any $v \in \int(K)$ 
since the conic MIP \eqref{eq:generic} is feasible. Therefore, by the finiteness property (Proposition 4.5 in \cite{Moran}), we have that $\vartheta_{CP}^*(b-v) > -\infty$ since 
$ \vartheta^*_{MIP}(b-v) > -\infty $. Hence, we conclude that $CP(b-v)$ is both strictly feasible and bounded below, implying that its conic dual is feasible due to Theorem~\ref{sdual_cp}. Finally, since the feasible regions of the conic dual of $CP(b-v)$ and $CP(b)$ coincide, we conclude that the conic dual of $CP(b)$ % the continuous relaxation of~\eqref{eq:generic}
 is feasible.
}
\end{enumerate}
\end{proof}

 We note here that although the assumptions in Proposition~\ref{Theorem:Binary} can be shown to be a special case of the assumption in Theorem~\ref{Theorem:Bounded} (see Section~\ref{section:result3}), we decided to prove Proposition~\ref{Theorem:Binary} separately since on one hand, it simplifies the proof of Theorem~\ref{Theorem:Bounded} and, on the other hand, the proof technique is slightly different: in the proof of Proposition~\ref{Theorem:Binary} we perturb the conic MIP~\eqref{eq:binary prob} only by changing the right-hand side vector whereas in the proof of Theorem~\ref{Theorem:Bounded} we perturb the primal~\eqref{eq:generic}  by introducing an auxiliary variable with an appropriate objective function coefficient. This difference is also noticeable on how we retrieve the optimal dual function of the conic MIP ~\eqref{eq:binary prob} (resp.,~\eqref{eq:dualgeneric}) from the optimal solution of the subadditive dual of the perturbed conic MIP~\eqref{eq:binary dualpertprob} (resp.,~\eqref{eq:dualMpertgeneric}): the optimal dual solution to~\eqref{eq:binary prob} is exactly the same optimal solution to~\eqref{eq:binary dualpertprob}, and the optimal dual solution to~\eqref{eq:dualgeneric} is a restriction of the solution to~\eqref{eq:dualMpertgeneric}.

\subsection{Examples}
The following examples, which  are adapted from \cite{BenTal01}, feature  two  feasible, below bounded conic MIPs. In the  first example, the subadditive dual is infeasible (hence, the duality gap is infinite). In the second example, {\black the} conic MIP is not strictly mixed-integer feasible (therefore, Theorem~\ref{Theorem} is not applicable) but its subadditive dual is a strong dual since it is feasible.

\noindent
{\bf Example 1.} Let $L^3 := \{x\in\mathbb{R}^3: \ \sqrt{x_1^2+x_2^2} \le x_3\}$ denote the 3-dimensional Lorentz cone.
Consider the following conic MIP:
\begin{equation}\label{eq:conicMIPex1}
\begin{aligned}
\inf_{x  \in   \mathbb{Z}^{2}}  &\hspace{0.5em}  x_2\\
\mathrm{s.t.}   &\hspace{0.5em} \begin{bmatrix} 1 & 0 \\ 0 & 1 \\ 1 & 0 \end{bmatrix} \begin{bmatrix} x_1 \\ x_2 \end{bmatrix} \succeq_{L^3} \begin{bmatrix} 0 \\ 0 \\ 0 \end{bmatrix},
\end{aligned}
\end{equation}
%and its subadditive dual:
%\begin{equation}  \label{eq:sadDualex1}
%\begin{aligned}
% \sup_f  &\hspace{0.25em} f \left ( \begin{bmatrix} 0 & 0 & 0 \end{bmatrix}^T  \right) \\
%  \mathrm{s.t.}   &\hspace{0.25em}  f \left ( \begin{bmatrix} 1 & 0 & 1 \end{bmatrix}^T  \right)  =  -f \left ( -\begin{bmatrix} 1 & 0 & 1 \end{bmatrix}^T  \right)  =  0    \\
%  &\hspace{0.25em}  f \left ( \begin{bmatrix} 0 & 1 &  0 \end{bmatrix}^T  \right)  =  -f \left ( -\begin{bmatrix} 0 & 1 & 0 \end{bmatrix}^T  \right)  =  1    \\
%&\hspace{0.25em}  f \left ( \begin{bmatrix} 0 & 0 & 0 \end{bmatrix}^T  \right)  = 0 \\
%  & \hspace{0.25em}  f \in \mathcal{F}_{L^3}.
%\end{aligned}
%\end{equation}
Observe that the primal problem~\eqref{eq:conicMIPex1} is below bounded (since we have $x_2=0$ in any feasible solution) but not strictly feasible. We claim that its subadditive dual   is infeasible. To prove our claim, we consider the following perturbed primal problem
\begin{equation}\label{eq:conicMIPex1p}
\begin{aligned}
\inf_{x  \in   \mathbb{Z}^{2}}  &\hspace{0.5em}  x_2\\
\mathrm{s.t.}   &\hspace{0.5em} \begin{bmatrix} 1 & 0 \\ 0 & 1 \\ 1 & 0 \end{bmatrix} \begin{bmatrix} x_1 \\ x_2 \end{bmatrix} \succeq_{L^3} \begin{bmatrix} 0 \\ 0 \\ -\epsilon \end{bmatrix},
\end{aligned}
\end{equation}
for $\epsilon > 0$. Notice that the  perturbed primal problem~\eqref{eq:conicMIPex1p} is unbounded below as any $(x_1, x_2)$ with $x_2 \in \mathbb{Z}_-$ and $x_1 = \left \lceil \frac{x_2^2 - \epsilon^2}{2 \epsilon} \right \rceil $ is feasible. Therefore, the subadditive dual of~\eqref{eq:conicMIPex1p} is infeasible, which implies that the subadditive dual of  problem~\eqref{eq:conicMIPex1} is also infeasible as these two subadditive duals share the same feasible region.

\noindent
{\bf Example 2.} Let $\mathbb{S}_+^3 := \{X\in\mathbb{R}^{3\times3}: \ u^T X u \ge 0 \ \forall u\in \mathbb{R}^3\}$ denote the cone of  $3\times3$ positive semidefinite matrices. 
Consider the following conic MIP:
\begin{equation}\label{eq:conicMIPex2}
\begin{aligned}
\inf_{x  \in   \mathbb{Z}^{2}}  &\hspace{0.5em}  x_2\\
\mathrm{s.t.}   &\hspace{0.5em} \begin{bmatrix} 0 & 0 & 0 \\ 0 & 1 & 0 \\ 0 & 0 & 0 \end{bmatrix}x_1 +  \begin{bmatrix} 1 & 0 & 0 \\ 0 & 0 & 1 \\ 0 & 1 & 0 \end{bmatrix}x_2 \succeq_{\mathbb{S}_+^3} \begin{bmatrix} -1 & 0 & 0 \\ 0 & 0 &  0 \\ 0 & 0 & 0 \end{bmatrix},
\end{aligned}
\end{equation}
and its subadditive dual:
\begin{equation}  \label{eq:sadDualex2}
\begin{aligned}
 \sup_f  &\hspace{0.25em} f \left ( \begin{bmatrix} -1 & 0 & 0 \\ 0 & 0 &  0 \\ 0 & 0 & 0 \end{bmatrix} \right) \\
  \mathrm{s.t.}   &\hspace{0.25em}  f \left ( \begin{bmatrix} 0 & 0 & 0 \\ 0 & 1 & 0 \\ 0 & 0 & 0 \end{bmatrix}  \right)  =  -f \left ( -\begin{bmatrix} 0 & 0 & 0 \\ 0 & 1 & 0 \\ 0 & 0 & 0 \end{bmatrix}  \right)  =  0    \\
  &\hspace{0.25em}  f \left ( \begin{bmatrix} 1 & 0 & 0 \\ 0 & 0 & 1 \\ 0 & 1 & 0 \end{bmatrix}  \right)  =  -f \left ( -\begin{bmatrix} 1 & 0 & 0 \\ 0 & 0 & 1 \\ 0 & 1 & 0 \end{bmatrix}\right)  =  1    \\
&\hspace{0.25em}  f \left ( \begin{bmatrix} 0 & 0 & 0 \\ 0 & 0 & 0 \\ 0 & 0 & 0 \end{bmatrix}  \right)  = 0 \\
  & \hspace{0.25em}  f \in \mathcal{F}_{\mathbb{S}_+^3}.
\end{aligned}
\end{equation}
Notice that the primal problem~\eqref{eq:conicMIPex2} is below bounded (since we have $x_2=0$ in any feasible solution)  but not strictly feasible. 
We observe that $\Lambda = e^1 ({e^1})^T$ is a feasible solution for the conic dual of the continuous relaxation of~\eqref{eq:conicMIPex2}, {\damr where $e^1 = \begin{bmatrix} 1 & 0 & 0 \end{bmatrix}^T$}. Therefore,
the function $f:\rr^{3\times3}\rightarrow\rr $ defined as
$
 f(A) = \Tr(\Lambda A) = A_{11}%\langle A, \Lambda \rangle 
 $ 
% for any $\Lambda\in \mathbb{S}_+^3$ feasible for the conic dual of the continuous relaxation of~\eqref{eq:conicMIPex2},
is a feasible solution for the  subadditive dual~\eqref{eq:sadDualex2} implying that
it is a strong dual.

%%%%%%%%%%%%%%%%%%%%%%%%%%%%%%%%%%%%%%%%%%%%%%%%%%%%%%%%%%%%%%%%%%%%%%%%%%%%%%%%%%%%%%%%%%%%%%%%%%%%%%%%%%%%%%%%%%%
%%%%%%%%%%%%%%%%%%%%%%%%%%%%%%%%%%%%%%%%%%%%%%%%%%%%%%%%%%%%%%%%%%%%%%%%%%%%%%%%%%%%%%%%%%%%%%%%%%%%%%%%%%%%%%%%%%%
%%%%%%%%%%%%%%%%%%%%%%%%%%%%%%%%%%%%%%%%%%%%%%%%%%%%%%%%%%%%%%%%%%%%%%%%%%%%%%%%%%%%%%%%%%%%%%%%%%%%%%%%%%%%%%%%%%%
\section{Properties of Dirichlet convex sets and the proof of Theorem~\ref{Dfiniteness}}
\label{sec:proof2}
%\section{Finiteness property for Dirichlet convex sets}

%%%%%%%%%%%%%%%%%%%%%%%%%%%%%%%%%%%%%%%%%%%%%%%%%%%%%%%%%%%%%%%%%%%%%%%%%%%%%%%%%%%%%%%%%%%%%%%%%%%%%%%%%%%%%%%%%%%
%%%%%%%%%%%%%%%%%%%%%%%%%%%%%%%%%%%%%%%%%%%%%%%%%%%%%%%%%%%%%%%%%%%%%%%%%%%%%%%%%%%%%%%%%%%%%%%%%%%%%%%%%%%%%%%%%%%
%%%%%%%%%%%%%%%%%%%%%%%%%%%%%%%%%%%%%%%%%%%%%%%%%%%%%%%%%%%%%%%%%%%%%%%%%%%%%%%%%%%%%%%%%%%%%%%%%%%%%%%%%%%%%%%%%%%

%%%%%%%%%%%%%%%%%%%%%%%%%%%%%%%%%%%%%%%%%%%%%%%%%%%%%%%%%%%%%%%%%%%%%%%%%%%%%%%%%%%%%%%%%%%%%%%%%%%%%%%%%%%%%%%%%%%
%\subsection{Mixed-lattices}
%%%%%%%%%%%%%%%%%%%%%%%%%%%%%%%%%%%%%%%%%%%%%%%%%%%%%%%%%%%%%%%%%%%%%%%%%%%%%%%%%%%%%%%%%%%%%%%%%%%%%%%%%%%%%%%%%%%

%\begin{dfn}[Mixed-lattices]
%A mixed-integer lattice is a set of the form
%$$\MM=\{Ax+By\tq x\in\zz^n,\ y\in\rr^p\},$$
%where  $A\in\rr^{m\times n}$, $B\in \rr^{m\times p}$ and the set $\LL=\{Ax\tq x\in\zz^n\}$ is a lattice contained in $V^\perp$, where $V=\{By\tq y\in\rr^n\}$. If $\MM=\{Ax\tq x\in\zz^n\}$, that is, there are no `continuous' components, we simply call $\MM$ a lattice.
%
%%We say that the mixed-integer lattice is full-dimensional if $\gen{[A\ B]}=\rr^m$, that is, the columns of $[A\ B]$ generate $\rr^m$.
%\end{dfn} 

%The following result will be used in the next section.
%
%\begin{lem}[Lemma 3.13 in \cite{DM2016}]\label{MinterW}
	%Let $\MM\subseteq \rr^n$ be a mixed-lattice and let $W\subseteq \rr^n$ be a linear subspace. Then $\MM\cap W$ is a mixed-lattice.	
%\end{lem}

In this section we first study some properties of Dirichlet convex set that allow us to show that some important classes of convex sets such as bounded convex sets, {\black strictly convex sets} and rational polyhedra are Dirichlet convex sets, and then give a proof of Theorem~\ref{Dfiniteness}.

%%%%%%%%%%%%%%%%%%%%%%%%%%%%%%%%%%%%%%%%%%%%%%%%%%%%%%%%%%%%%%%%%%%%%%%%%%%%%%%%%%%%%%%%%%%%%%%%%%%%%%%%%%%%%%%%%%%
\subsection{Dirichlet convex sets}
\label{sec:DCS}
%%%%%%%%%%%%%%%%%%%%%%%%%%%%%%%%%%%%%%%%%%%%%%%%%%%%%%%%%%%%%%%%%%%%%%%%%%%%%%%%%%%%%%%%%%%%%%%%%%%%%%%%%%%%%%%%%%%

%The following definition is a slight modification of the definition of ``Convex sets with the Dirichlet property'' that appears in \cite{DM2011}.
%
%\begin{dfn}[Dirichlet convex sets]
	%A convex set $K\subseteq \rr^n$ is said to be a Dirichlet convex set with respect to a mixed-lattice $\MM$ if for all $z\in K\cap \MM$, $r\in \rec(K)$ and for all $\epsilon>0$, $\gamma\geq0$ there exists a point $w\in K\cap \MM$ at a distance less than $\epsilon$ from the half-line $\{z + \lambda r\tq \lambda \geq \gamma\}$. 
%\end{dfn} 

We start by stating a lemma on intersection of mixed-lattices and linear subspaces.

\begin{lem}[Lemma 3.13 in \cite{DM2016}]\label{MinterW}
	Let $\MM\subseteq \rr^n$ be a mixed-lattice and let $W\subseteq \rr^n$ be a linear subspace. Then $\MM\cap W$ is a mixed-lattice.	
\end{lem}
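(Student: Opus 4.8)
The plan is to prove Lemma~\ref{MinterW} directly from the structure of mixed-lattices, reducing the claim to the purely lattice-theoretic fact that the intersection of a lattice with a rational subspace (relative to that lattice) is again a lattice. Since the statement is cited as Lemma 3.13 in \cite{DM2016}, I would present a self-contained argument rather than merely quoting it.

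First I would write $\MM=\{Ax+By : x\in\zz^{n_1},\ y\in\rr^{n_2}\}$ with $\LL=\{Ax : x\in\zz^{n_1}\}$ a lattice contained in $V^\perp$, where $V=\{By : y\in\rr^{n_2}\}$. Thus $\MM=\LL+V$, an orthogonal-in-the-relevant-sense (at least, $\LL\subseteq V^\perp$) sum of a lattice and a linear subspace. Given a linear subspace $W\subseteq\rr^n$, I want to show $\MM\cap W=(\LL+V)\cap W$ is again of the form "lattice plus subspace" with the lattice part sitting in the orthogonal complement of the subspace part. The natural candidate for the continuous part is $V':=V\cap W$, a linear subspace; the candidate for the discrete part is the image of $(\LL+V)\cap W$ under orthogonal projection onto $(V')^\perp$, or more concretely $\LL':=\{\pi(z) : z\in\MM\cap W\}$ where $\pi$ is the projection killing $V'$. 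The key step is to show $\LL'$ is a lattice and that $\MM\cap W=\LL'+V'$ with $\LL'\subseteq (V')^\perp$.

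The main technical points, in order: (1) Decompose any $z\in\MM\cap W$ as $z=\ell+v$ with $\ell\in\LL$, $v\in V$; since $\LL\perp V$ this decomposition is the orthogonal one, so $\ell$ and $v$ are each determined by $z$. (2) Show that the set of "$\LL$-parts" of points of $\MM\cap W$, call it $\LL_0=\{\ell\in\LL : \exists v\in V,\ \ell+v\in W\}$, is a subgroup of $\LL$, hence a sublattice of the lattice $\LL$ (any subgroup of a lattice is a lattice, being a finitely generated torsion-free abelian group that is discrete). This uses that $W$ is closed under addition and that membership "$\exists v\in V:\ \ell+v\in W$" is equivalent to "$\ell\in W+V$", which is a linear subspace, so $\LL_0=\LL\cap(W+V)$ is indeed a subgroup of $\LL$. (3) For the continuous part, show that if $\ell+v\in W$ and $\ell+v'\in W$ then $v-v'\in W$, i.e. $v-v'\in V\cap W=V'$; so once one representative $v_\ell$ is fixed for each $\ell\in\LL_0$, the full fiber over $\ell$ is $\ell+v_\ell+V'$. (4) Conclude $\MM\cap W=\{\ell+v_\ell : \ell\in\LL_0\}+V'$; a mild additional argument (choosing the $v_\ell$ consistently, e.g. via a group section, or simply projecting onto $(V')^\perp$) shows this equals $\LL'+V'$ for a genuine lattice $\LL'$ lying in $(V')^\perp$, which is exactly the required mixed-lattice form.

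The step I expect to be the main obstacle is (3)–(4): ensuring that the discrete part can be chosen to be an honest lattice (closed under addition, discrete, contained in $(V')^\perp$) rather than just a coset-selection that is a priori only a set. The clean way around this is to work with $\pi$, the orthogonal projection onto $(V')^\perp$: one checks $\pi(\MM\cap W)$ is a subgroup of $\rr^n$, that it is discrete (because modulo $V'$ the continuous directions have been quotiented out and what remains is governed by the discrete lattice $\LL$ together with the finitely many directions of $V$ that survive in $W$ — a finiteness/discreteness argument), and that $\MM\cap W=\pi(\MM\cap W)+V'$ with $\pi(\MM\cap W)\perp V'$ by construction. Discreteness is the crux: it follows because $\MM\cap W$ has no continuous directions other than $V'$, so its image under $\pi$ has trivial connected component, and a subgroup of $\rr^k$ with trivial connected component is a lattice. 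Once discreteness is in hand, the mixed-lattice structure of $\MM\cap W$ — with lattice part $\pi(\MM\cap W)$ and subspace part $V'$ — is immediate, completing the proof.
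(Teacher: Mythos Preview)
The paper does not actually prove this lemma; it simply cites it as Lemma~3.13 in \cite{DM2016} and uses it as a black box. So there is no ``paper's own proof'' to compare against, and your decision to supply a self-contained argument is the right one.

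Your approach is correct and standard. The decomposition $\MM=\LL+V$ with $\LL\subseteq V^\perp$, the identification $\LL_0=\LL\cap(W+V)$ as a sublattice, and the use of the orthogonal projection $\pi$ onto $(V')^\perp$ (where $V'=V\cap W$) all work as you describe. The one place where your write-up is a bit loose is the discreteness of $\LL':=\pi(\MM\cap W)$: the phrase ``a subgroup of $\rr^k$ with trivial connected component is a lattice'' tacitly assumes closedness, and projections of closed sets need not be closed. The clean fix is to argue discreteness directly: any $p\in\LL'$ decomposes orthogonally as $p=\ell+v_2$ with $\ell\in\LL\subseteq V^\perp$ and $v_2\in V\cap(V')^\perp$, so $\|p\|^2=\|\ell\|^2+\|v_2\|^2$; if $p_k\to 0$ then $\ell_k\to 0$, hence $\ell_k=0$ eventually by discreteness of $\LL$, and then the preimage lies in $V\cap W=V'$, forcing $v_2=0$. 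This shows $0$ is isolated in $\LL'$, hence $\LL'$ is discrete, hence a lattice. With that detail filled in, your argument is complete.
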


In order to establish a sufficient condition for a convex set to be a Dirichlet convex set, we need the following lemma, which is a consequence of the Dirichlet's Diophantine approximation theorem and appears in this form for the case $\MM=\zz^n$ in~\cite{BCCZ2010}. 

\begin{lem}[Basu et al.~\cite{BCCZ2010}]\label{dlemma}
	Let $\MM\subseteq \rr^n$ be a mixed-lattice.  If $z \in \MM$ and $r \in \aff(\MM)$, then for all $\epsilon >0$ and $\gamma \geq 0$, there exists a point of $w\in \MM$ at a distance less than $\epsilon$ from the half-line $\{z + \lambda r\tq \lambda \geq \gamma\}$.
\end{lem}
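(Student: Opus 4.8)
\textbf{Proof plan for Lemma~\ref{dlemma}.}

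The plan is to reduce the statement to the classical simultaneous Diophantine approximation theorem of Dirichlet by exploiting the mixed-lattice structure. First I would write $\MM = \{Ax + By : x\in\zz^{n_1},\ y\in\rr^{n_2}\}$ with $\LL = \{Ax : x\in\zz^{n_1}\}$ a genuine lattice contained in $V^\perp$, where $V = \{By : y\in\rr^{n_2}\}$. Since $\aff(\MM) = V \oplus \aff(\LL) = V \oplus \ls(\LL)$ (the lattice $\LL$, being a lattice, has affine hull equal to its linear span when it contains the origin, which we may assume after translating by an element of $\MM$), any $r\in\aff(\MM)$ splits uniquely as $r = r_V + r_\LL$ with $r_V\in V$ and $r_\LL\in\ls(\LL)$. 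The contribution of $r_V$ is harmless: the half-line moves in directions that $\MM$ contains exactly (because $V\subseteq\MM$ via the continuous block), so the approximation difficulty lies entirely in the lattice direction $r_\LL$.

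The key steps, in order, would be: (1) translate so that $z$ plays the role of the origin and reduce to approximating the half-line $\{\lambda r : \lambda\ge\gamma\}$ by points of $\MM$; (2) decompose $r = r_V + r_\LL$ and observe that for any target $\lambda r$ we can cancel the $V$-component exactly using a continuous vector $By\in V$, so it suffices to find, for every $\gamma'\ge 0$ and $\epsilon>0$, a lattice point of $\LL$ within distance $\epsilon$ of the ray $\{\lambda r_\LL : \lambda \ge \gamma'\}$; (3) apply the Dirichlet-approximation consequence for genuine lattices (this is precisely the $\MM=\zz^n$ case cited from \cite{BCCZ2010}, transported to the lattice $\LL$ via a linear isomorphism sending $\LL$ to $\zz^{n_1'}$ where $n_1' = \dim\ls(\LL)$), obtaining infinitely many lattice points tracking the ray arbitrarily well; (4) reassemble: given such a lattice point $\ell = Ax$ close to $\lambda r_\LL$ for a suitable large $\lambda\ge\gamma$, set $w = \ell + \lambda r_V = Ax + B y_\lambda$ where $By_\lambda = \lambda r_V\in V$; then $w\in\MM$ and $\|w - \lambda r\| = \|\ell - \lambda r_\LL\| < \epsilon$, so $w$ is within $\epsilon$ of the half-line. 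Finally, undo the translation to return to $z + \lambda r$.

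The main obstacle is step (3): making precise that the cited Dirichlet statement, stated for $\zz^n$, applies verbatim to an arbitrary lattice $\LL$. This requires noting that a lattice of rank $k$ is the image of $\zz^k$ under an injective linear map, that linear maps distort distances only by bounded factors (so an $\epsilon/C$-approximation in $\zz^k$ pushes forward to an $\epsilon$-approximation in $\LL$ for a constant $C$ depending only on the basis matrix), and that the direction vector $r_\LL\in\ls(\LL)$ pulls back to a well-defined vector in $\rr^k$. One must also be slightly careful that the half-line constraint $\lambda\ge\gamma$ survives: the Dirichlet-type result furnishes approximating points arbitrarily far out along the ray, so one simply selects one with $\lambda$ large enough, which is automatic. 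Everything else is bookkeeping with the orthogonal (or merely direct-sum) decomposition $\aff(\MM) = V \oplus \ls(\LL)$, which is clean because $\LL\subseteq V^\perp$ by the definition of a mixed-lattice.
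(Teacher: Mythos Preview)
Your proposal is correct, and the core step---reducing the pure-lattice case to $\zz^k$ via an injective linear map and controlling the distortion by the operator norm---matches the paper's Case~1 exactly.

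The only genuine difference is in how the continuous part of $\MM$ is handled. You use the orthogonal decomposition $\aff(\MM)=V\oplus\ls(\LL)$ to split $r=r_V+r_\LL$, absorb $r_V$ exactly into the $B$-block, and then apply the lattice case to $\LL$ and $r_\LL$. The paper instead argues (very tersely) that one can pick a genuine lattice $\LL'\subseteq\MM$ containing $z$ and with $\aff(\LL')=\aff(\MM)$ (so that $r\in\aff(\LL')$ automatically), and then applies Case~1 to $\LL'$ directly---in effect it \emph{discretizes} the continuous directions rather than projecting them out. Your route is more explicit and makes transparent why the continuous block contributes nothing to the approximation problem; the paper's route is shorter but leaves the construction of $\LL'$ to the reader. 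Both are valid and neither requires any idea beyond Dirichlet for $\zz^k$ plus a linear change of coordinates. One minor remark: your parenthetical about translating so that $\LL$ contains the origin is unnecessary---$\LL$ is a subgroup and already contains $0$; the translation by $z$ is only to simplify the target half-line, not to fix $\LL$.
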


%We omit the proof of Lemma~\ref{dlemma} as the extension to general mixed-lattices is a straightforward consequence of the result in~\cite{BCCZ2010}.
Although  the extension to general mixed-lattices is a straightforward consequence of the result in~\cite{BCCZ2010}, we still present the proof of Lemma~\ref{dlemma} in Appendix~\ref{app:proofs},  {\black for completeness.}

%Recall that a face of a convex set $K\subseteq\rr^n$ is a convex subset $F$ of $K$ such that for all $x_1, x_2\in K$, if $\lambda x_1 + (1-\lambda)x_2\in F$ for some $0<\lambda<1$, then $x_1,x_2\in F$. A face $F$ of $K$  is said to be a proper face of $K$ if $F$ is nonempty and $F\neq K$.

The following result is a special case of a classic result in convex analysis.  
{\black We present a proof in Appendix~\ref{app:proofs} that is adapted} from the proof of Theorem 18.1 in \cite{rockafellar1970}.

\begin{lem}\label{Fintegerhull}
Let $\MM\subseteq \rr^n$ be a mixed-lattice and let  $X\subseteq \rr^n$ be a convex set. Then for any face $F$ of $X$, we have $\conv(F\cap\MM)=\conv(X\cap\MM)\cap F$.
\end{lem}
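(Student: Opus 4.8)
\textbf{Proof plan for Lemma~\ref{Fintegerhull}.}

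The plan is to prove the two inclusions $\conv(F\cap\MM)\subseteq\conv(X\cap\MM)\cap F$ and $\conv(X\cap\MM)\cap F\subseteq\conv(F\cap\MM)$ separately, mirroring the structure of the proof of Theorem~18.1 in \cite{rockafellar1970}. The first inclusion is immediate: since $F\subseteq X$, we have $F\cap\MM\subseteq X\cap\MM$, so $\conv(F\cap\MM)\subseteq\conv(X\cap\MM)$; and since $F$ is convex with $F\cap\MM\subseteq F$, we also have $\conv(F\cap\MM)\subseteq F$. Intersecting the two gives the claim.

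The substantive direction is $\conv(X\cap\MM)\cap F\subseteq\conv(F\cap\MM)$. The plan is to take a point $w\in\conv(X\cap\MM)\cap F$ and write it, by Carath\'eodory, as a convex combination $w=\sum_{i=1}^{k}\lambda_i w_i$ with $w_i\in X\cap\MM$, all $\lambda_i>0$, $\sum_i\lambda_i=1$. Because $w$ lies in the face $F$ of the convex set $X$ and is a convex combination of points $w_i\in X$ with strictly positive weights, the defining property of a face forces each $w_i$ to lie in $F$. (Recall: a face $F$ of a convex set $X$ is a convex subset such that whenever a point of $F$ is in the relative interior of a segment with endpoints in $X$, both endpoints lie in $F$; an easy induction on $k$ extends this from segments to finite convex combinations with positive coefficients, writing $w=\lambda_1 w_1+(1-\lambda_1)w'$ with $w'=\sum_{i\geq2}\frac{\lambda_i}{1-\lambda_1}w_i\in X$ by convexity of $X$, applying the two-point face property to conclude $w_1\in F$ and $w'\in F$, and recursing on $w'$.) Hence each $w_i\in F\cap\MM$, so $w\in\conv(F\cap\MM)$, which completes the inclusion.

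I do not expect a serious obstacle here; the only point requiring care is the induction extending the face property from two points to $k$ points, and the (routine) edge cases $k=1$ and $\lambda_1=1$ in that induction. One should also confirm that the notion of ``face'' being used is the standard one from \cite{rockafellar1970} (convex subset closed under ``containing relative interiors of segments it meets''), since the mixed-lattice $\MM$ plays no role in the argument beyond being the fixed set one intersects with — the entire content is the interaction between convex combinations and faces, and $\MM$ just comes along for the ride. If instead the paper intends ``exposed face'' or a face defined via a supporting hyperplane $\{x : a^Tx=\beta\}$ with $X\subseteq\{x:a^Tx\leq\beta\}$, the argument is even more transparent: $a^Tw_i\leq\beta$ for all $i$ and $\sum_i\lambda_i a^Tw_i=a^Tw=\beta$ with $\lambda_i>0$ forces $a^Tw_i=\beta$, i.e.\ $w_i\in F$, for every $i$.
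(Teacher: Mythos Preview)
Your proposal is correct and follows essentially the same approach as the paper's proof: both dispose of the easy inclusion immediately, then take a point in $\conv(X\cap\MM)\cap F$, express it as a convex combination of points in $X\cap\MM$ with strictly positive coefficients, and use the defining property of a face to force all the constituent points into $F$. The only cosmetic difference is how the face property is pushed from two points to $p$ points: you do it by the explicit induction $w=\lambda_1 w_1+(1-\lambda_1)w'$, whereas the paper observes that $x$ lies in the relative interior of $D=\conv(\{z_1,\dots,z_p\})$, so every $y\in D$ sits opposite some $y'\in D$ through $x$, and the two-point face property then gives $D\subseteq F$ in one stroke.
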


The next lemma establishes that the property of being a Dirichlet convex set is invariant under invertible affine mappings that preserve the corresponding mixed-lattice.

\begin{lem}\label{forgottenlemma}
Let $\MM\subseteq \rr^n$ be a mixed-lattice and let  $X\subseteq \rr^n$ be a Dirichlet convex set. Let $T :\rr^n\rightarrow \rr^n $ be an invertible affine mapping such that $T(\MM)=\MM$. Then $T(X)$ is a Dirichlet convex set with respect to $\MM$.
	
\end{lem}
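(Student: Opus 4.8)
\textbf{Proof plan for Lemma~\ref{forgottenlemma}.}

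The plan is to verify the defining property of Dirichlet convex sets directly for $T(X)$, transporting everything back to $X$ via the inverse map $T^{-1}$, which is again an invertible affine map with $T^{-1}(\MM)=\MM$. Write $T(u)=Lu+t$ where $L$ is an invertible linear map and $t\in\rr^n$. First I would fix an arbitrary point $z'\in T(X)\cap\MM$, a recession direction $r'\in\rec(T(X))$, and parameters $\epsilon>0$, $\gamma\geq0$, and our goal is to produce a point $w'\in T(X)\cap\MM$ within Euclidean distance $\epsilon$ of the half-line $\{z'+\lambda r'\tq \lambda\geq\gamma\}$.

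The key steps, in order: (1) Pull back the data. Set $z:=T^{-1}(z')$; since $T^{-1}(\MM)=\MM$ and $T^{-1}(T(X))=X$, we have $z\in X\cap\MM$. For the recession direction, note that the linear part of $T^{-1}$ is $L^{-1}$, and since recession cones transform under affine maps by their linear parts, $r:=L^{-1}r'$ lies in $\rec(X)$. Observe that $T$ maps the half-line $\{z+\lambda r\tq\lambda\geq\gamma\}$ exactly onto $\{z'+\lambda r'\tq\lambda\geq\gamma\}$, because $T(z+\lambda r)=Lz+\lambda L r+t = z' + \lambda r'$. (2) Apply the Dirichlet property of $X$, but with a rescaled tolerance. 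Since $L^{-1}$ is a fixed invertible linear map, it is Lipschitz: there is a constant $c_L>0$ with $\|L^{-1}u - L^{-1}v\|\leq c_L\|u-v\|$ for all $u,v$; equivalently $\|Lx - Ly\|\geq c_L^{-1}\|x-y\|$. Apply the hypothesis that $X$ is Dirichlet convex with $z$, $r$, $\gamma$, and tolerance $\epsilon':=\epsilon/\|L\|_{\mathrm{op}}$ (using the operator norm of $L$, with the convention that if $L=0$ this case is vacuous, but $L$ is invertible so $\|L\|_{\mathrm{op}}>0$): this yields $w\in X\cap\MM$ at distance less than $\epsilon'$ from $\{z+\lambda r\tq\lambda\geq\gamma\}$. (3) Push forward. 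Set $w':=T(w)$. Then $w'\in T(X)\cap\MM$ since $T(\MM)=\MM$. If $\|w-(z+\lambda_0 r)\|<\epsilon'$ for some $\lambda_0\geq\gamma$, then
\[
\|w' - (z'+\lambda_0 r')\| = \|T(w) - T(z+\lambda_0 r)\| = \|L(w-(z+\lambda_0 r))\| \leq \|L\|_{\mathrm{op}}\,\epsilon' = \epsilon,
\]
so $w'$ is within distance $\epsilon$ (in fact strictly less, replacing $\epsilon'$ by a slightly smaller value or noting the strict inequality propagates) of the half-line $\{z'+\lambda r'\tq\lambda\geq\gamma\}$. Since $z'$, $r'$, $\epsilon$, $\gamma$ were arbitrary, $T(X)$ is a Dirichlet convex set with respect to $\MM$.

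I do not expect a genuine obstacle here; the only point requiring care is the quantitative bookkeeping with the Lipschitz constants of $L$ and $L^{-1}$ — one must rescale the tolerance $\epsilon$ \emph{before} invoking the Dirichlet property of $X$, not after, so that the pushed-forward distance bound comes out to exactly $\epsilon$. A secondary minor point is confirming that recession cones transform correctly under affine maps (namely $\rec(T(X)) = L(\rec(X))$ for $T$ invertible affine), which is standard convex analysis; it guarantees that $r'\in\rec(T(X))$ if and only if $r=L^{-1}r'\in\rec(X)$, so no recession direction is lost in the translation.
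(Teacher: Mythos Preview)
Your proposal is correct and follows essentially the same argument as the paper: write $T(x)=Lx+t$, pull back the data $(z',r')$ to $(z,r)\in (X\cap\MM)\times\rec(X)$ via $T^{-1}$, apply the Dirichlet property of $X$ with the rescaled tolerance $\epsilon/\|L\|_{\mathrm{op}}$, and push the resulting point forward by $T$, using the operator-norm bound to recover distance at most $\epsilon$. The paper's proof is identical up to notation (it uses $A$ for your $L$ and $\|\cdot\|_2$ for the operator norm).
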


\begin{proof} Let $T(x)=Ax+b$, where $A$ is an invertible $n\times n$ matrix and $b\in\rr^n$.
	
		First, observe that since $X$ is a convex set and $T$ is an affine mapping, then $T(X)=AX+b$ is indeed a convex set. 
		
		We now show that $T(X)$ is a Dirichlet convex set. Let $y\in T(X)\cap \MM$, $s\in \rec(T(X))$, $\epsilon>0$ and $\gamma\geq0$. We must show that there exists a point $v\in T(X)\cap\MM$ at a distance less than $\epsilon$ from the half-line $\{y + \lambda s\tq \lambda \geq \gamma\}$.
		
		Since $T$ is invertible and $T(\MM)=\MM$, there exists $z\in X\cap \MM$ such that $T(z)=y$. Moreover, due to the fact that $T$ is an affine mapping, there exists $r\in \rec(X)$ such that $Ar=s$.
		
		Since $X$ is a Dirichlet convex set with respect to $\MM$, we have that there exists a point $w\in X\cap\MM$ at a distance less than $\epsilon/\|A\|_2$ from the half-line $\{z + \lambda r\tq \lambda \geq \gamma\}$, that is, $\|w-z-\lambda^*r \|_2\leq\epsilon/\|A\|_2$ for some $\lambda^*\geq\gamma$. Let $v=T(w)$ and notice that $v\in T(X)\cap\MM$ as $w\in X\cap\MM$. We obtain that
		\begin{align*}
		\|v-y-\lambda^*s\|_2&=\|T(w)-T(z)-\lambda^*Ar\|_2\\
		&=\|Aw+b -Az-b-\lambda^*Ar\|_2\\
		&\leq \|A\|_2\|w-z-\lambda^*r\|_2\\
		&\leq \epsilon.
		\end{align*}
		
		This implies that the distance between $v$ and the half-line $\{y + \lambda s\tq \lambda \geq \gamma\}$ is less than $\epsilon$. Therefore, we conclude that $T(X)$ is a Dirichlet convex set with respect to~$\MM$.
\end{proof}
We remark that not all   operations preserve the  property of being a Dirichlet convex set. For instance, the intersection of two Dirichlet convex sets is not necessarily a Dirichlet convex set, {\black see Example 2.9 in \cite{DM2016} for an illustration of this fact}.

The most important result in this section is a sufficient condition for a convex set to be a Dirichlet convex set, that we state in the next proposition.

\begin{prop}\label{Dsetsprop}
	Let $\MM\subseteq \rr^n$ be a mixed-lattice and let  $X\subseteq \rr^n$ be a convex set such that  $\rec(X)=\rec(\conv(X\cap\MM))$. Then $X$ is a Dirichlet convex set with respect to $\MM$.
\end{prop}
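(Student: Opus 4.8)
The plan is to first reduce to the case $X=\conv(X\cap\MM)$ and then prove that case by induction on dimension. Write $Y:=\conv(X\cap\MM)$. Since $X$ is convex, $Y\subseteq X$, so $X\cap\MM=Y\cap\MM$, and by hypothesis $\rec(X)=\rec(Y)$. Hence it suffices to show that $Y$ is a Dirichlet convex set with respect to $\MM$: for arbitrary $z\in X\cap\MM=Y\cap\MM$ and $r\in\rec(X)=\rec(Y)$, the point $w\in Y\cap\MM=X\cap\MM$ provided by the Dirichlet property of $Y$ witnesses it for $X$. Note that $Y=\conv(Y\cap\MM)$. This reduction is the only place the hypothesis is used; for a direction $r\in\rec(X)\setminus\rec(Y)$ the half-line would leave $Y$, while the $\MM$-points at our disposal all lie in $Y$.

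Next I would prove, by induction on $\dim Y$, that any convex $Y$ with $Y=\conv(Y\cap\MM)$ is Dirichlet convex with respect to $\MM$; the case $\dim Y\le 0$ is immediate. Fix $z\in Y\cap\MM$, $r\in\rec(Y)$, $\epsilon>0$, $\gamma\ge0$; we may assume $r\neq0$ (otherwise take $w=z$). Using that $\MM$ is a group (so $\MM-z=\MM$), Lemma~\ref{forgottenlemma} (translation by $-z$) and Lemma~\ref{MinterW}, we may also assume that $Y$ is full-dimensional, i.e. $\aff(Y)=\aff(\MM)$: otherwise translate by $z$ and pass to the linear subspace $\aff(Y)-z$, replacing $\MM$ by the mixed-lattice $\MM\cap(\aff(Y)-z)$, whose affine hull equals $\aff(Y)-z$ because $Y-z$ is the convex hull of its points in that lattice.

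Now split into two cases. If $z+\gamma'r\in\int(Y)$ for some $\gamma'\ge\gamma$, then, since $\int(Y)+\cone(r)\subseteq\int(Y)$, the whole half-line $\{z+\lambda r\tq\lambda\ge\gamma'\}$ lies in $\int(Y)=\int(\conv(Y\cap\MM))$. By a Carath\'{e}odory-type argument the point $z+\gamma'r$ lies in the interior of a full-dimensional simplex $\Delta=\conv\{v_0,\dots,v_d\}$ with all $v_i\in Y\cap\MM$; pick $\rho>0$ with $B(z+\gamma'r,\rho)\subseteq\Delta$. Since $r\in\rec(Y)$, the prism $\Delta+\cone(r)$ is contained in $Y$ and contains the tube $\bigcup_{\lambda\ge0}B(z+\gamma'r+\lambda r,\rho)$ of uniform radius $\rho$ about the half-line $\{z+\lambda r\tq\lambda\ge\gamma'\}$. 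Applying Lemma~\ref{dlemma} with the point $z\in\MM$, the direction $r\in\aff(\MM)$, tolerance $\min\{\epsilon,\rho\}$ and parameter $\gamma'$ gives $w\in\MM$ with $\|w-(z+\lambda^*r)\|<\min\{\epsilon,\rho\}$ for some $\lambda^*\ge\gamma'$; as $z+\lambda^*r$ lies on the axis of the tube, $w\in B(z+\lambda^*r,\rho)\subseteq Y$, so $w\in Y\cap\MM$ is within $\epsilon$ of $\{z+\lambda r\tq\lambda\ge\gamma\}$. In the remaining case $z+\lambda r\in\partial Y$ for all $\lambda\ge\gamma$, so the half-line $R:=\{z+\lambda r\tq\lambda\ge\gamma\}$ lies in $\partial Y$ and hence in the minimal face $F$ of $Y$ containing $z+(\gamma+1)r\in\ri(R)$; one checks (via the face property applied to suitable segments through $z+(\gamma+1)r\in\ri(F)$) that $F$ is a proper face, that $z\in F$ and that $r\in\rec(F)$. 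By Lemma~\ref{Fintegerhull}, $F=\conv(Y\cap\MM)\cap F=\conv(F\cap\MM)$, and $\dim F<\dim Y$, so by the inductive hypothesis $F$ is Dirichlet convex with respect to $\MM$; applying this to $z\in F\cap\MM$ and $r\in\rec(F)$ yields $w\in F\cap\MM\subseteq Y\cap\MM$ within $\epsilon$ of $R$. This completes the induction.

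The main obstacle is the interior case: Lemma~\ref{dlemma} only produces $\MM$-points near the half-line, which a priori need not lie in $Y$, so the heart of the argument is to exhibit inside $Y$ a neighbourhood of the half-line of positive width that does not shrink in the $r$-direction. The simplex-prism construction does exactly this, using crucially that $Y=\conv(Y\cap\MM)$ (so an interior point is surrounded by $\MM$-vertices) together with $r\in\rec(Y)$. The remaining delicate points are the standard-but-careful facts about relative interiors, faces and recession cones used in the boundary case ($z\in F$ and $r\in\rec(F)$); these are cleanest when $Y$ is closed, so since $\conv(X\cap\MM)$ may fail to be closed some extra care with limits is needed there.
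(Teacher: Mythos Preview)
Your proof is correct and follows essentially the same approach as the paper's: induction on dimension, with an interior case handled by Lemma~\ref{dlemma} and a proper-face case handled by Lemma~\ref{Fintegerhull} plus the inductive hypothesis. The only cosmetic differences are that the paper works directly with $X$ (rather than first passing to $Y=\conv(X\cap\MM)$), restricts to the sublattice $\MM\cap\aff(X\cap\MM)$ via Lemma~\ref{MinterW} in place of your full-dimensionality reduction, and simply asserts the existence of a uniform-radius tube about the half-line inside $X$ (using $r\in\rec(X)$) rather than building your explicit simplex-prism.
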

\begin{proof}
	Let $z\in X\cap \MM$. Then by Lemma~\ref{forgottenlemma} applied to $\MM$, $X$ and $T(x)=x-z$, we conclude that the set $X$ is a Dirichlet convex set with respect to $\MM$ if and only if  $X-z$ is a Dirichlet convex set with respect to $\MM$. Therefore, we may assume for the rest of the proof that the set $X$ contains the origin, and thus $W=\aff(X\cap\MM)$ is a linear subspace (we will use this latter fact in order to apply Lemma~\ref{MinterW} in {\bf Case 1.} below). 
	
	Let	$z\in X\cap \MM$, $r\in \rec(X)$, $\epsilon>0$ and $\gamma\geq0$. We must show that there exists a point $w\in X\cap\MM$ at a distance less than $\epsilon$ from the half-line $\{z + \lambda r\tq \lambda \geq \gamma\}$.
	
	We will use induction {\damr on} the dimension of $X$. Clearly, if $\dim(X)=0$, the result is true. Now, we assume that  any convex set  $X'\subseteq \rr^n$ with $\dim(X')<\dim(X)$  such that  $\rec(X')=\rec(\conv(X'\cap\MM))$ is a Dirichlet convex set with respect to $\MM$. We have two cases.
	
	\begin{itemize}
		\item {\bf Case 1: the half-line $\{z + \lambda r\tq \lambda \geq \gamma'\}$ is contained in the relative interior of $X$ for some $\gamma'\geq \gamma$.}
		
		Since $\{z + \lambda r\tq \lambda \geq \gamma'\}$ is contained in the relative interior of $X$, there exists $\epsilon'>0$  such that $\epsilon'< \epsilon$ and any point in $\aff(X)$ at distance less than $\epsilon'$ from the half-line $\{z + \lambda r\tq \lambda \geq \gamma'\}$ belongs to $X$.
		
		Recall the linear subspace $W=\aff(X\cap\MM)$ and let $\MM'=\MM\cap W$. By Lemma~\ref{MinterW}, we obtain that $\MM'\subseteq W$ is a mixed-lattice. Moreover, by definition of $W$ and $\MM'$, we obtain $W=\aff(\MM')$ and since $r\in \rec(X)=\rec(\conv(X\cap\MM))$ and $X\cap \MM \subseteq W $ we obtain that $r\in \rec(W)=\aff(\MM')$. On the other hand, since $z\in X\cap \MM$, we obtain $z\in \MM'$. We can apply Lemma~\ref{dlemma} and conclude that there exists a point $w\in \MM'$ at a distance less than $\epsilon'$ from the half-line $\{z + \lambda r\tq \lambda \geq \gamma'\}$. Since $w\in\MM'$, we obtain that $w\in W\subseteq \aff(X)$, and therefore, by the selection of $\epsilon'$ and since $\gamma'\geq\gamma$, we conclude that $w\in X\cap\MM$ and that $w$ is at distance less than $\epsilon$ from the half-line $\{z + \lambda r\tq \lambda \geq \gamma\}$.  
		
		\item {\bf Case 2:  the half-line $\{z + \lambda r\tq \lambda \geq \gamma\}$ is contained in a proper face $F$ of $X$.}
		
		Since $F$ is a proper face of $X$, we have $\dim(F)<\dim(X)$. Moreover, by Lemma~\ref{Fintegerhull}, we obtain that $\conv(F\cap\MM)=\conv(X\cap\MM)\cap F$. Furthermore, since $\conv(F\cap\MM)\neq\emptyset$, we obtain
		\begin{align*}
		\rec(\conv(F\cap\MM))&=\rec(\conv(X\cap\MM))\cap \rec(F)\\
		&=\rec(X)\cap \rec(F)\\
		&=\rec(F).
		\end{align*}
		Therefore, we can apply the induction hypothesis to $F$ and conclude that $F$ is a Dirichlet convex set with respect to $\MM$. Since the half-line $\{z + \lambda r\tq \lambda \geq \gamma\}$ is contained in $F$, we obtain that $z\in F\cap \MM$ and that $r\in \rec(F)$. Therefore, we conclude that there exists a point  $w\in F\cap\MM\subseteq X\cap \MM$ at a distance less than $\epsilon$ from the half-line $\{z + \lambda r\tq \lambda \geq \gamma\}$.
	\end{itemize}
\end{proof}

When the mixed-integer lattice is $\MM=\zz^{n_1}\times\rr^{n_2}$, some examples of convex sets $X$ satisfying the assumption $\rec(X)=\rec(\conv(X\cap\MM))$ in Proposition~\ref{Dsetsprop} are: bounded convex sets, rational polyhedra (\cite{Meyer1974}), closed strictly convex sets (\cite{DM2013}, \cite{DM2016}) and  closed convex sets whose recession cone is generated by vectors in $\MM$ (see Corollary 1 in \cite{DM2013} for a proof of this assertion in the case $\MM=\zz^n$ and the recession cone of the convex set being a rational polyhedral cone). 
Based on the discussion above, we obtain the following corollary of Proposition~\ref{Dsetsprop}.
\begin{cor}\label{corProp2} The following are Dirichlet convex sets with respect to $\zz^{n_1}\times\rr^{n_2}$: bounded convex sets, rational polyhedra, closed strictly convex {\damr sets} and  closed convex sets whose recession cone is generated by vectors in $\zz^{n_1}\times\rr^{n_2}$.	
\end{cor}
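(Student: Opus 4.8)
The plan is to deduce Corollary~\ref{corProp2} directly from Proposition~\ref{Dsetsprop}. That proposition asserts that any convex set $X\subseteq\rr^n$ with $\rec(X)=\rec(\conv(X\cap\MM))$ is a Dirichlet convex set with respect to $\MM$, so it suffices to verify, taking $\MM=\zz^{n_1}\times\rr^{n_2}$, that each of the four listed classes satisfies this recession-cone identity. In every case the inclusion $\rec(\conv(X\cap\MM))\subseteq\rec(X)$ is immediate from $\conv(X\cap\MM)\subseteq X$ (together with monotonicity of $\rec$ on closed convex sets where closedness is assumed, and trivially for bounded sets), so all the content is in the reverse inclusion $\rec(X)\subseteq\rec(\conv(X\cap\MM))$.

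I would then dispatch the four classes in turn. For bounded convex sets there is nothing to prove: $\rec(X)=\{0\}$, and since $\conv(X\cap\MM)\subseteq X$ is also bounded, $\rec(\conv(X\cap\MM))=\{0\}$ as well. For rational polyhedra the identity is exactly the content of Meyer's theorem \cite{Meyer1974} (in its mixed-integer form), which gives that $\conv(X\cap\MM)$ is again a rational polyhedron with $\rec(\conv(X\cap\MM))=\rec(X)$; I would simply quote it. For closed strictly convex sets I would invoke the corresponding results of \cite{DM2013,DM2016}, noting that the recession cone of a closed strictly convex set is a linear subspace, which is the structural feature those arguments rely on. Finally, for a closed convex set $X$ whose recession cone is generated by vectors $v_1,\dots,v_k\in\MM$, I would give the short direct argument: if $X\cap\MM=\emptyset$ the Dirichlet property holds vacuously; otherwise, fixing $z\in X\cap\MM$, for each $i$ the points $z+t v_i$ with $t\in\zz_{\geq0}$ all lie in $X$ (since $v_i\in\rec(X)$) and in $\MM$ (since $z,v_i\in\MM$ and $\MM=\zz^{n_1}\times\rr^{n_2}$ is closed under adding integer multiples of its elements), so by convexity the whole ray $\{z+t v_i:t\geq0\}$ lies in $\conv(X\cap\MM)$ and hence $v_i\in\rec(\conv(X\cap\MM))$; therefore $\rec(X)=\cone\{v_1,\dots,v_k\}\subseteq\rec(\conv(X\cap\MM))$. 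This recovers, for $\MM=\zz^n$ with rational polyhedral recession cone, Corollary~1 of \cite{DM2013}.

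I expect the main obstacle to be bookkeeping rather than anything conceptual: one must check that the cited results of \cite{Meyer1974,DM2013,DM2016}, which are usually stated for the pure-integer lattice $\zz^n$, carry over verbatim to the mixed lattice $\zz^{n_1}\times\rr^{n_2}$, and one must handle the degenerate case $X\cap\MM=\emptyset$ cleanly (under the convention $\rec(\emptyset)=\{0\}$ the recession-cone identity still holds, and the Dirichlet property is vacuous). A further minor point to watch is that $\conv(X\cap\MM)$ need not be closed even when $X$ is, so in the last case one should produce the recession directions of $\conv(X\cap\MM)$ directly from the lattice points $z+\zz_{\geq0}v_i$, as above, rather than by passing to the closure.
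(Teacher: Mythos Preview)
Your proposal is correct and follows essentially the same route as the paper, which likewise deduces the corollary from Proposition~\ref{Dsetsprop} by observing (with citations to \cite{Meyer1974}, \cite{DM2013,DM2016}, and Corollary~1 of \cite{DM2013}) that each of the four classes satisfies $\rec(X)=\rec(\conv(X\cap\MM))$. One small slip worth fixing: the recession cone of a closed strictly convex set is in general a half-line rather than a linear subspace (e.g., a paraboloid), but since you are merely invoking the cited results this aside does not affect the argument.
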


%\ref{Dsetsprop}
If a convex set $X$ does not contain lines, then Proposition  \ref{Dsetsprop} can be seen as an extension of Proposition 4.7 in \cite{DM2011}. Indeed, if $\rec(X)=\rec(\conv(X\cap\MM))$ holds, then $\conv(X\cap\MM)$ is closed (see Theorem 3.20 in \cite{DM2016}).
Conversely, if a closed convex set $X$ satisfies that $\conv(X\cap\MM)$ is a closed set and contains a mixed-lattice vector in its relative interior, then it can be shown that  $\rec(X)=\rec(\conv(X\cap\MM))$ (see Theorem 2 in \cite{DM2013} for the proof of this result when $\MM=\zz^n$ and the convex set is full-dimensional).

%%%%%%%%%%%%%%%%%%%%%%%%%%%%%%%%%%%%%%%%%%%%%%%%%%%%%%%%%%%%%%%%%%%%%%%%%%%%%%%%%%%%%%%%%%%%%%%%%%%%%%%%%%%%%%%%%%%
\subsection{The finiteness property}
%%%%%%%%%%%%%%%%%%%%%%%%%%%%%%%%%%%%%%%%%%%%%%%%%%%%%%%%%%%%%%%%%%%%%%%%%%%%%%%%%%%%%%%%%%%%%%%%%%%%%%%%%%%%%%%%%%%

Let $S\subseteq \rr^n$. A full-dimensional convex set $Q\subseteq\rr^n$ is said to be an {\em $S$-free convex set} if $\int(Q)\cap S=\emptyset$. $Q$ is said to be a {\em maximal $S$-free convex set} if it is not strictly contained in any other $S$-free convex set. {\damr When $S= P\cap\MM$, where $P$ is a convex set and $\MM$ is a mixed-lattice, Averkov \cite{Averkov2013} showed that maximal $S$-free convex sets are polyhedra (see Theorem 2.4 in~\cite{Averkov2013}).}

The following lemma gives a property of maximal $S$-free sets in the case $S$ is defined as the mixed-lattice points contained in a Dirichlet convex set. {\damr This lemma is crucial in the proof of Theorem~\ref{Dfiniteness}}.

\begin{lem}\label{dprop}
	Let $\MM\subseteq \rr^n$ be a mixed-lattice and $P\subseteq \rr^n$ be a Dirichlet convex set with respect to $\MM$. Let $S=P\cap \MM$, and let $Q$ be a full-dimensional maximal $S$-free convex set.  Then if $r\in \rec(P\cap Q)$, then {\damr$-r\in \rec(Q)$}.
\end{lem}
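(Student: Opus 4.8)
The plan is to argue by contradiction: suppose $r\in\rec(P\cap Q)$ but $-r\notin\rec(Q)$. Since $Q$ is a full-dimensional maximal $S$-free convex set and $r\in\rec(Q)$, the assumption that $-r\notin\rec(Q)$ means that $Q$ is not a cylinder in the direction $r$; intuitively, "sliding $Q$ backwards along $-r$" produces points outside $Q$. I would use this to construct a strictly larger convex set $Q'\supsetneq Q$ that is still $S$-free, contradicting maximality of $Q$. The natural candidate is $Q' = \conv\big(Q\cup(Q - \delta r)\big)$ for a suitable $\delta>0$, or, more cleanly, to take $Q'$ to be the convex hull of $Q$ together with a single point $p - \delta r$ where $p$ is chosen in the relative interior of a face of $Q$ on which the recession direction $-r$ "escapes"; since $-r\notin\rec(Q)$, such a $Q'$ strictly contains $Q$.

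The crux is then to show $Q'$ remains $S$-free, i.e.\ $\int(Q')\cap (P\cap\MM)=\emptyset$. Here is where the Dirichlet property of $P$ enters. Take any point $q\in\int(Q')\cap\MM$; I want to derive a contradiction. Because $Q'$ is built from $Q$ by adding mass in the direction $-r$, a point of $\int(Q')$ that is not already in $Q$ can be written (by Carathéodory applied to the convex hull construction) as a convex combination involving $Q$ and the new $-r$-shifted points; pushing $q$ forward in the direction $+r$ by a large enough amount lands inside $\int(Q)$. Concretely, there is $\lambda_0\geq 0$ with $q+\lambda r\in\int(Q)$ for all $\lambda\geq\lambda_0$. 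Now if $q$ happened to lie in $P\cap\MM=S$, then since $r\in\rec(P\cap Q)\subseteq\rec(P)$ and $P$ is a Dirichlet convex set with respect to $\MM$, applying the Dirichlet property to $z=q$, the recession direction $r$, tolerance $\epsilon$ small, and $\gamma=\lambda_0$, we obtain a point $w\in P\cap\MM=S$ within distance $\epsilon$ of the half-line $\{q+\lambda r:\lambda\geq\lambda_0\}$. Since that half-line lies in $\int(Q)$ and $\int(Q)$ is open, choosing $\epsilon$ small enough forces $w\in\int(Q)\cap S$, contradicting that $Q$ is $S$-free. Hence no point of $\int(Q')$ lies in $S$, so $Q'$ is a strictly larger $S$-free convex set — contradicting maximality.

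I would organize the write-up as: (1) reduce to the contrapositive and fix $r\in\rec(P\cap Q)$, $-r\notin\rec(Q)$; (2) construct $Q'\supsetneq Q$ convex with the property that every $q\in Q'$ satisfies $q+\lambda r\in\int(Q)$ for all large $\lambda$, and verify $Q'$ is full-dimensional and strictly larger (this uses only $-r\notin\rec(Q)$ and standard facts about recession cones of closed convex sets, noting we may assume $Q$ closed); (3) show $Q'$ is $S$-free via the Dirichlet-property argument above; (4) conclude by contradicting maximality of $Q$. The main obstacle is step (2)–(3): making precise the "slide along $-r$" construction so that (a) it genuinely enlarges $Q$ exactly when $-r\notin\rec(Q)$, and (b) every new point still recedes into $\int(Q)$ along $+r$, so that the Dirichlet approximation can be pulled back into $\int(Q)$. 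One must also be slightly careful that $P\cap Q$ having recession direction $r$ is what licenses using $r\in\rec(P)$ in the Dirichlet property; the direction $r$ need not be in $\rec(\MM)$, which is exactly why the Dirichlet hypothesis (rather than Lemma~\ref{dlemma} alone) is needed.
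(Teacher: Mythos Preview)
Your proposal is correct and follows essentially the same approach as the paper: enlarge $Q$ in the direction $-r$, then use the Dirichlet property of $P$ (applied to a hypothetical $z\in\int(Q')\cap S$ and the recession direction $r\in\rec(P)$) to produce a point of $S$ in $\int(Q)$, contradicting $S$-freeness. The paper's construction is marginally cleaner---it takes $Q'=\{x-\lambda r: x\in Q,\ \lambda\geq 0\}$ directly (so that $\int(Q')=\int(Q)-\mathbb{R}_{\geq 0}r$ and every interior point is automatically of the form $x-\gamma r$ with $x\in\int(Q)$), which sidesteps the Carath\'eodory/convex-hull bookkeeping in your step~(2); but the substance of the argument is the same.
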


\begin{proof}
	Let  {\damr $Q'=\{x-\lambda r\tq x\in Q,\lambda\geq0 \}$. In order to prove the lemma,  we will show that $Q'=Q$. Since $Q'$ is a convex set, $Q\subseteq Q'$} and $Q$ is a maximal $S$-free convex set, it suffices to show that $Q'$ is an $S$-free convex set. Assume for a contradiction that $\int(Q')\cap S\neq \emptyset$. Then there exists $x\in \int(Q)$ and $\gamma\geq0$ such that $z=x-\gamma r\in S$. Since $x\in\int(Q)$ and $r\in\rec(Q)$, there exists $\epsilon>0$ such that %the half-line
	{\damr the set $\mathcal{H}_\varepsilon:=\{x + \lambda r\tq \lambda \geq 0\}+B(0,\epsilon)$}
	 is contained in $\int(Q)$. On the other hand, since $z\in S$ and $P$ is a Dirichlet convex set with respect to $\MM$, we have that there exists a point  $z'\in S$ at a distance less than $\epsilon$ from the half-line
	 {\damr $\{z + \lambda r\tq \lambda \geq \gamma\}$. Since $\{z + \lambda r\tq \lambda \geq \gamma\}=\{x + \lambda r\tq \lambda \geq 0\}$, we obtain  that $z'\in \mathcal{H}_\varepsilon\subseteq \int(Q)$}, a contradiction with the fact that $Q$ is an $S$-free set. {\damr Therefore, we conclude that $Q'=Q$, as desired.}
\end{proof}

Now we are ready to prove Theorem~\ref{Dfiniteness}.

%\begin{thm}\label{Dfiniteness}
%Let $\MM\subseteq \rr^n$ be a mixed-lattice, let $c\in\rr^n$ and let $P\subseteq \rr^n$ be a Dirichlet convex set with respect to $\MM$. Let $S=P\cap \MM$ and let $K\subseteq\rr^n$ be a full-dimensional closed convex set such that $\int(K)\cap S\neq\emptyset$. Then
%$$\sup_{x\in K \cap S} c^Tx<+\infty\qquad\Leftrightarrow\qquad\sup_{x\in K\cap P} c^Tx<+\infty.$$
%\end{thm}

\begin{proof}[Proof of Theorem~\ref{Dfiniteness}] 
We only need to show {\damr $\sup \{  c^Tx \tq x \in X\cap S \}<+\infty$ implies $\sup \{ c^Tx  \tq  x \in X \cap P \}<+\infty $}.

Let ${\damr z^*}=\sup\{c^Tx \tq x\in X \cap S\} $ and let $X^\geq=\{x\in X \tq c^Tx\geq {\damr z^*} \}$. Assume for a contradiction that $\sup\{c^Tx \tq x\in X\cap P\} =+\infty$. Then $X^\geq$ is a full-dimensional unbounded $S$-free convex set. Therefore, there exists a full-dimensional maximal $S$-free polyhedron $Q\supseteq X^\geq$ {\damr (by Theorem 2.4 in~\cite{Averkov2013})}.

On the other hand, since $X$ is not $S$-free, we obtain $X\nsubseteq Q$, so there exists $x_0\in X\setminus Q$. In particular, as $Q$ is a polyhedron, there exists an inequality $a^Tx\leq b$ of $Q$ such that $a^Tx_0>b$. Also notice that since $x_0\notin Q$, we have $x_0\notin X^\geq$ and thus $c^Tx_0<{\damr z^*}$.

Let $\{x_k\}_{k\geq 1}\subseteq X^\geq\cap P$ be such that $c^Tx\rightarrow+\infty$ and for each $k\geq 1$ define 
$$y_k= \left[\frac{c^Tx_k-{\damr z^*}}{c^Tx_k-c^Tx_0}\right]x_0+ \left[\frac{{\damr z^*}-c^Tx_0}{c^Tx_k-c^Tx_0}\right]x_k,$$
{\damr this sequence is well-defined since we have $c^Tx_k\geq z^* > c^Tx_0$ for all $k\geq 1$}. 

Notice that for all $k\geq 1$
$$c^Ty_k=\frac{c^Tx_0(c^Tx_k-{\damr z^*})+c^Tx_k({\damr z^*}-c^Tx_0)}{c^Tx_k-c^Tx_0}={\damr z^*},$$
and that $y_k \in X$ as it is a convex combination of $x_0\in X$ and $x_k\in X^\geq\subseteq X$, and $X$ is a convex set.

Now as $X^\geq\cap P\subseteq P\cap Q$, we obtain $\rec(X^\geq\cap P)\subseteq \rec(P\cap Q)$. {\damr Let $L=\aff(\rec(X^\geq\cap P))$ and notice that $L$ is a linear subspace.} Then, by Lemma~\ref{dprop}, we obtain:
$$(X^\geq\cap P) + L\subseteq Q.$$

Moreover, by the definition of the linear subspace $L$, we have that $(X^\geq\cap P)\cap L^\perp$ is a bounded set. Therefore, for any $f\in\rr^n$, $\sup\{f^Tx\tq x\in (X^\geq\cap P) + L\}=+\infty$ if and only if there exists $l\in L$ such that $f^Tl>0$ if and only if $\inf\{f^Tx\tq x\in (X^\geq\cap P) + L\}=-\infty$. Since $a^Tx\leq b$ is a valid inequality for $(X^\geq\cap P)$ (as $(X^\geq\cap P) + L\subseteq Q$), the latter properties imply that $a^Tx\geq b-\eta$ is valid for $(X^\geq\cap P) + L$ for some $\eta>0$.

Observe that:
\begin{align*}
a^Ty_k&=\frac{a^Tx_0(c^Tx_k-{\damr z^*})+a^Tx_k({\damr z^*}-c^Tx_0)}{c^Tx_k-c^Tx_0}\\
&> \frac{b(c^Tx_k-{\damr z^*})+(b-\eta)({\damr z^*}-c^Tx_0)}{c^Tx_k-c^Tx_0}\\
&\geq b + \frac{-\eta {\damr z^*}+\eta c^Tx_0}{c^Tx_k-c^Tx_0},
\end{align*}
where the first inequality follows from: (i) $a^Tx_0>b$ and $c^Tx_k\geq {\damr z^*}$, and (ii) $a^Tx_k\geq b-\eta$ (since $\{x_k\}_{k\geq 1}\subseteq X^\geq\cap P$ and $a^Tx\geq b-\eta$ is valid for $(X^\geq\cap P) + L$) and ${\damr z^*}>c^Tx_0$ (since $x_0\notin Q$, and thus $x\notin X^\geq$).

Therefore, as $\frac{-\eta {\damr z^*} +\eta c^Tx_0}{c^Tx_k-c^Tx_0}\rightarrow0$ as $k\rightarrow+\infty$, for large enough $\bar K \ge 1$, we have $a^Ty_{\bar K} >b$. On the other hand, since $y_{\bar K}\in X$ and $c^Ty_{\bar K}={\damr z^*}$ we obtain $y_{\bar K}\in X^\geq$. Thus, since $X^\geq\subseteq Q $, $y_{\bar K}\in Q$ and therefore $a^Ty_{\bar K}\leq b$, a contradiction.
\end{proof}

%%%%%%%%%%%%%%%%%%%%%%%%%%%%%%%%%%%%%%%%%%%%%%%%%%%%%%%%%%%%%%%%%%%%%%%%%%%%%%%%%%%%%%%%%%%%%%%%%%%%%%%%%%%%%%%%%%%
%%%%%%%%%%%%%%%%%%%%%%%%%%%%%%%%%%%%%%%%%%%%%%%%%%%%%%%%%%%%%%%%%%%%%%%%%%%%%%%%%%%%%%%%%%%%%%%%%%%%%%%%%%%%%%%%%%%
%%%%%%%%%%%%%%%%%%%%%%%%%%%%%%%%%%%%%%%%%%%%%%%%%%%%%%%%%%%%%%%%%%%%%%%%%%%%%%%%%%%%%%%%%%%%%%%%%%%%%%%%%%%%%%%%%%%
\section{Proof of Theorem~\ref{TheoremESF}}
\label{sec:proof3}
%\section{Some natural sufficient conditions that imply dual feasibility}
%%%%%%%%%%%%%%%%%%%%%%%%%%%%%%%%%%%%%%%%%%%%%%%%%%%%%%%%%%%%%%%%%%%%%%%%%%%%%%%%%%%%%%%%%%%%%%%%%%%%%%%%%%%%%%%%%%%
%%%%%%%%%%%%%%%%%%%%%%%%%%%%%%%%%%%%%%%%%%%%%%%%%%%%%%%%%%%%%%%%%%%%%%%%%%%%%%%%%%%%%%%%%%%%%%%%%%%%%%%%%%%%%%%%%%%
%%%%%%%%%%%%%%%%%%%%%%%%%%%%%%%%%%%%%%%%%%%%%%%%%%%%%%%%%%%%%%%%%%%%%%%%%%%%%%%%%%%%%%%%%%%%%%%%%%%%%%%%%%%%%%%%%%%

The proof of Theorem~\ref{TheoremESF} is a consequence of  {\damr Theorem~\ref{sdual_cp}} 
%wrong: \ref{Theorem:Bounded},
 and Theorem~\ref{Dfiniteness}.

\begin{proof}[Proof of Theorem~\ref{TheoremESF}]

We first recall  Conditions {\bf A.} and {\bf B.} in Theorem~\ref{TheoremESF}: 

\begin{enumerate}
	\item[{\bf A.}] The set $S_2$ is bounded.
	\item[{\bf B.}] The set $S_2$ is a rational polyhedron.
\end{enumerate}	
	
We will show that if the optimal value of~\eqref{eq:ESF} is finite, $\int(S_1)\cap S_2\cap (\zz^{n_1}  \times \rr^{n_2})\neq \emptyset$ and one of the assumptions above is satisfied, then the conic dual of the continuous relaxation of~\eqref{eq:ESF}  is feasible.	
	
Observe that under either Assumption {\bf A.} or Assumption {\bf B.} the set  $S_2$ is a Dirichlet convex  set with respect to the mixed-lattice $\zz^{n_1}  \times \rr^{n_2}$ (see Corollary~\ref{corProp2}).  Therefore, we can use Theorem~\ref{Dfiniteness} with $X=S_1$, $P=S_2$ and $\MM=\zz^{n_1}  \times \rr^{n_2}$ to conclude that the optimal value of the continuous relaxation of~\eqref{eq:ESF} is finite. 

Now, since the nonlinear conic constraints in~\eqref{eq:ESF} are strictly feasible (that is $\int(S_1)\cap S_2\neq \emptyset$) and either {\bf A.} or {\bf B.} is satisfied, then we have that either Condition~{\em (a.)} or Condition~{\em (b.)} in Theorem~\ref{sdual_cp} holds. Therefore, the continuous relaxation of~\eqref{eq:ESF} and its {\damr conic} dual satisfy strong duality. Moreover, since the optimal value of the continuous relaxation of~\eqref{eq:ESF} is finite, we conclude that the conic dual of the continuous relaxation of~\eqref{eq:ESF} is solvable, and therefore it must be feasible.
\end{proof}

\section{Final remarks}%{Conclusion and final remarks}
%%%%%%%%%%%%%%%%%%%%%%%%%%%%%%%%%%%%%%%%%%%%%%%%%%%%%%%%%%%%%%%%%%%%%%%%%%%%%%%%%%%%%%%%%%%%%%%%%%%%%%%%%%%%%%%%%%%
%%%%%%%%%%%%%%%%%%%%%%%%%%%%%%%%%%%%%%%%%%%%%%%%%%%%%%%%%%%%%%%%%%%%%%%%%%%%%%%%%%%%%%%%%%%%%%%%%%%%%%%%%%%%%%%%%%%
%%%%%%%%%%%%%%%%%%%%%%%%%%%%%%%%%%%%%%%%%%%%%%%%%%%%%%%%%%%%%%%%%%%%%%%%%%%%%%%%%%%%%%%%%%%%%%%%%%%%%%%%%%%%%%%%%%%
\label{section:conc}

By weak duality, any function $F \in\FF_K$ (that is, $F$ subadditive and non-decreasing with respect to the cone $K$)  that satisfies $F(0)=0$ is a {\em cut-generating function} (see, for instance,~\cite{CGF2014}) and, in particular, defines the following valid inequality for the feasible region of the conic MIP~\eqref{eq:generic}:
\begin{equation}\label{sad_ineq}
\sum_{j=1}^{n_1}F(A^j)x_j+\sum_{j=1}^{n_2}\bar F( {G^j} )y_j\geq F(b).
\end{equation}
Conversely, the strong duality result in Theorem~\ref{Theorem:Bounded} yields the following corollary.
\begin{cor}
	Let $\pi^Tx+\gamma^Ty\geq \pi_o$ be a valid inequality for the feasible region of the conic MIP~\eqref{eq:generic} and suppose that there exists a  subadditive function $f \in\FF_K$ satisfying $f(0)=0$, $f(A^j) = -f(-A^j) = \pi_j, j=1,\dots,n_1$ and $\bar f(G^j) = -\bar f(-G^j) = \gamma_j, j=1,\dots,n_2$. Then, there exists a subadditive function $F \in\FF_K$ such that $F(0)=0$, $F(A^j) = -F(-A^j) = \pi_j, j=1,\dots,n_1$, $\bar F(G^j) = -\bar F(-G^j) = \gamma_j, j=1,\dots,n_2$ and $F(b)\geq \pi_o$. In particular, the valid inequality of the form~\eqref{sad_ineq} defined by $F$ dominates $\pi^Tx+\gamma^Ty\geq \pi_o$. 
\end{cor}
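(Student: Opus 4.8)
The plan is to derive this corollary directly from Theorem~\ref{Theorem:Bounded} by constructing an auxiliary conic MIP whose subadditive dual feasibility is witnessed by the hypothesized function $f$, and whose strong duality then produces the desired dominating function $F$. First I would set up the conic MIP~\eqref{eq:generic} with the given data $A$, $G$, $b$ and with objective vector $(\pi,\gamma)$ in place of $(c,d)$; call its optimal value $z^*$ and its subadditive dual value $\rho^*$. By hypothesis, $f\in\FF_K$ satisfies exactly the constraints of~\eqref{eq:dualgeneric} for this choice of objective (the equalities $f(A^j)=-f(-A^j)=\pi_j$, $\bar f(G^j)=-\bar f(-G^j)=\gamma_j$, and $f(0)=0$), so the subadditive dual is feasible. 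The primal is feasible as well: since $\pi^Tx+\gamma^Ty\geq\pi_o$ is a valid inequality for the feasible region of the conic MIP, that region is nonempty (a valid inequality is vacuously ``valid'' only when there is something to be valid for — but to be safe I would treat the empty case separately, where any $F$ works trivially, e.g. $F\equiv f$, since validity of the original inequality is then automatic).

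With both the primal and the subadditive dual feasible, Theorem~\ref{Theorem:Bounded} applies: the subadditive dual~\eqref{eq:dualgeneric} is a strong dual, so there exists an optimal solution $F\in\FF_K$ with $F(0)=0$, $F(A^j)=-F(-A^j)=\pi_j$ for $j=1,\dots,n_1$, $\bar F(G^j)=-\bar F(-G^j)=\gamma_j$ for $j=1,\dots,n_2$, and $F(b)=\rho^*=z^*$. It remains to check that $F(b)\geq\pi_o$. This is where the validity of the original inequality enters: for every feasible $(x,y)$ of the conic MIP we have $\pi^Tx+\gamma^Ty\geq\pi_o$, hence $z^*=\inf\{\pi^Tx+\gamma^Ty : Ax+Gy\succeq_K b,\ x\in\zz^{n_1},\ y\in\rr^{n_2}\}\geq\pi_o$. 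Combining, $F(b)=z^*\geq\pi_o$, which is the claimed inequality. The final sentence of the corollary — that the cut~\eqref{sad_ineq} induced by $F$ dominates $\pi^Tx+\gamma^Ty\geq\pi_o$ — is then immediate: the two inequalities have identical left-hand-side coefficients (the $\pi_j$ and $\gamma_j$), and the right-hand side $F(b)$ of the former is at least the right-hand side $\pi_o$ of the latter, so any point satisfying the $F$-inequality satisfies the original one.

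I do not anticipate a serious obstacle here; the corollary is essentially a repackaging of Theorem~\ref{Theorem:Bounded}. The one point requiring a little care is the boundedness/feasibility bookkeeping: one must confirm that dual feasibility of~\eqref{eq:dualgeneric} really does force $z^*>-\infty$ (this is noted inside the proof of Theorem~\ref{Theorem:Bounded}, via weak duality), and that the primal is genuinely feasible so that Theorem~\ref{Theorem:Bounded} is applicable rather than vacuous. If the practitioner wants the corollary to hold even when the conic MIP is infeasible, the statement should be read with the convention that $F=f$ works since then every linear inequality is valid and $F(b)=f(b)$ can be bounded below by $\pi_o$ only after noting $z^*=+\infty\geq\pi_o$; I would simply remark on this edge case in one line and otherwise proceed with the main argument above.
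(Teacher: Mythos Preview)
Your proposal is correct and matches the paper's intended derivation: the paper states only that ``the strong duality result in Theorem~\ref{Theorem:Bounded} yields the following corollary'' without spelling out the argument, and your construction---replacing the objective $(c,d)$ by $(\pi,\gamma)$, using $f$ to witness dual feasibility, and invoking Theorem~\ref{Theorem:Bounded} to extract an optimal $F$ with $F(b)=z^*\ge\pi_o$---is exactly the argument the paper has in mind.

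One small remark: your handling of the infeasible edge case is slightly muddled. Taking $F=f$ does not give $F(b)\ge\pi_o$ in general (you conflate $f(b)$ with $z^*=+\infty$). The cleanest fix is simply to note that the corollary is implicitly stated for a nonempty feasible region---the paper's phrasing ``a valid inequality for the feasible region'' presupposes this---and drop the attempt to cover the vacuous case.
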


A similar result was proven in \cite{Moran} (see Corollary 6.1). We emphasize here that we could use the value function of the conic MIP~\eqref{eq:generic}, $\vartheta^*_{MIP}:\rr^m\rightarrow \rr \cup\{-\infty,+\infty\}$ to generate a valid inequality of the form~\eqref{sad_ineq} (see Proposition 4.8 in \cite{Moran}). Furthermore, by appropriately changing the objective function in~\eqref{eq:generic}, {\black and considering the associated value function,} we could generate all valid inequalities for~\eqref{eq:generic} in this way. However, one disadvantage of this approach as compared to {\damr using} functions in $\FF_K$ is
{\black that value functions of conic MIPs are difficult to compute. Moreover, as value functions are in general not finite-valued everywhere, they cannot be  cut-generating functions (by definition).}
% that value functions in general are not finite-valued everywhere, and thus, value functions are not cut-generating functions.

As a final remark, we can combine the proofs of Theorem~\ref{Theorem:Binary}, Theorem~\ref{Theorem:Bounded} and the derivation in \cite{Moran} (see the proof of Proposition 5.3) to  give an optimal solution of the subadditive dual problem of~\eqref{eq:generic} that is, in fact, the value function of particular conic MIP. We rigorously state this result in the following corollary.

\begin{cor}
If the conic MIP~\eqref{eq:generic} and its subadditive dual are both feasible, then there exists an optimal dual function $f^*:\rr^m\rightarrow\rr$  that is the value function of a particular conic MIP. More precisely,

\begin{equation*} \label{eq:perturbed3}
\begin{aligned} 
f^*(u) = \inf   &\hspace{0.5em}  c^Tx + d^Ty + [\vartheta^*_{MIP}(b)-\vartheta^*_{MIP}(b-v)] w + [\vartheta^*_{MIP}(b) - 2\Theta^*] s \\
\mathrm{s.t.}   
%  &\hspace{0.5em} A  x + G y + v w - b s \succeq_K b  \\
%  & \hspace{0.5em}   w  {\epsilon} s \ge   \\
%  & \hspace{-0.56em}   -w  \ge  {-(1+\epsilon)}   \\
& \hspace{0.5em} \begin{bmatrix} A & G & v  \\ 0 & 0 & 1  \\ 0 & 0 & -1  \end{bmatrix}  \begin{bmatrix} x \\ y \\  w \end{bmatrix}
-  \begin{bmatrix}  b \\ -\epsilon \\ -(1+\epsilon) \end{bmatrix}  s \succeq_{K \times \mathbb{R}_+  \times \mathbb{R}_+} \begin{bmatrix} u\\ 0\\0\end{bmatrix}  \\ 
& \hspace{.5em}  x  \in   \mathbb{Z}^{n_1}  ,  y \in   \mathbb{R}^{n_2}, w \in   \mathbb{Z}, s \in   \mathbb{Z},
\end{aligned}
\end{equation*}
where  $\epsilon \in (0,1)$,  $v \in \int(K)$ and 
\begin{equation*} \label{eq:perturbed2}
\begin{aligned} 
\Theta^* = \inf  &\hspace{0.5em}  c^Tx + d^Ty + [\vartheta^*_{MIP}(b)-\vartheta^*_{MIP}(b-v)] w\\
  \mathrm{s.t.}   &\hspace{0.5em} A  x + G y + v w\succeq_K b  \\
  & \hspace{0.5em}   w \ge  {-\epsilon}   \\
  & \hspace{-0.56em}   -w  \ge  {-(1+\epsilon)}   \\
  & \hspace{.5em}  x  \in   \mathbb{R}^{n_1}  ,  y \in   \mathbb{R}^{n_2}, w \in   \mathbb{R}.
\end{aligned}
\end{equation*}
\end{cor}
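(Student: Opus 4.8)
The plan is to unwind the two perturbation constructions used in the proofs of Theorem~\ref{Theorem:Bounded} and Proposition~\ref{Theorem:Binary}, and then feed the resulting mixed-integer strictly feasible conic MIP into the value-function construction from the proof of Proposition~5.3 of \cite{Moran}. The displayed conic MIP is precisely what this composition produces, and the claimed $f^*$ is its value function restricted to right-hand sides of the form $(u,0,0)$; the proof then consists of verifying that this $f^*$ is feasible for~\eqref{eq:dualgeneric} and attains value $z^*$ at $b$.

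First I would reproduce the opening of the proof of Theorem~\ref{Theorem:Bounded}: fix $v\in\int(K)$, take a primal-feasible $(\hat x,\hat y)$ (which exists by hypothesis), and form the $w$-perturbed problem~\eqref{eq:Mpertgeneric}, whose optimal value is $z^*$ and for which $(\hat x,\hat y,1)$ satisfies the conic constraint strictly; here $z^*>-\infty$ because the subadditive dual is feasible, and $\Theta^*$ is by definition the optimal value of its continuous relaxation. Then I would apply to the binary variable $w$ the box-relaxation trick from the proof of Proposition~\ref{Theorem:Binary}: replace $w\in\{0,1\}$ by $w\in\zz$ together with $w\ge-\epsilon$ and $-w\ge-(1+\epsilon)$ for a fixed $\epsilon\in(0,1)$. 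Since $\zz\cap[-\epsilon,1+\epsilon]=\{0,1\}$, this leaves the feasible region unchanged, so the new conic MIP (over the cone $K\times\rr_+\times\rr_+$) still has value $z^*$, is bounded below, and is now \emph{mixed-integer strictly feasible} via $(\hat x,\hat y,1)$, so Theorem~\ref{Theorem} applies to it.

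Next I would invoke the derivation in the proof of Proposition~5.3 of \cite{Moran}: for a mixed-integer strictly feasible, below-bounded conic MIP, that argument produces an optimal solution of the subadditive dual which is the value function of the conic MIP obtained by homogenizing the right-hand side with an integer scaling variable $s$ and adding to the objective a linear term in $s$, the coefficient being calibrated so that the resulting value function is finite-valued on all of $\rr^m\times\rr\times\rr$ and recovers the correct value at the original right-hand side. Tracking the data through the two perturbations, the $w$-column keeps the objective coefficient $\vartheta^*_{MIP}(b)-\vartheta^*_{MIP}(b-v)$ inherited from~\eqref{eq:Mpertgeneric}, while the calibrating coefficient of $s$ works out to $\vartheta^*_{MIP}(b)-2\Theta^*$ --- exactly the quantities in the statement. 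Writing $g:\rr^m\times\rr\times\rr\to\rr$ for this value function, I would then finish as at the end of the proof of Theorem~\ref{Theorem:Bounded}: $f^*(u):=g(u,0,0)$ is feasible for~\eqref{eq:dualgeneric} because the $A^j$-equalities and the $\bar f$-conditions on the columns $G^j$ are inherited (the two added scalar rows contribute $0$ when $s=0$) and, by $K$-nondecreasingness, $f^*(b)=g(b,0,0)\ge g(b,-\epsilon,-(1+\epsilon))=z^*$, which together with weak duality forces $f^*(b)=z^*$ and optimality of $f^*$.

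The main obstacle is the finiteness bookkeeping behind the scaling variable and the two correction coefficients. Membership in $\mathcal{F}_K$ demands that $f^*$ be real-valued on all of $\rr^m$, so one must check that the displayed conic MIP has a finite optimum for every $u$: infeasibility is ruled out because $v\in\int(K)$ leaves room to satisfy $Ax+Gy+vw-bs\succeq_K u$ (taking $s\ge1$ when needed), and unboundedness below is ruled out by weak duality applied to the perturbed problems, which is exactly what forces the presence of the terms $\vartheta^*_{MIP}(b)-\vartheta^*_{MIP}(b-v)$ and $\vartheta^*_{MIP}(b)-2\Theta^*$: they are tuned so that the infimum over integer $w$ and $s$ (in particular over $s\ge2$, where a careless coefficient would let the objective diverge) stays bounded, while $z^*$ is still attained at $s=0$, $u=b$. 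Once finiteness is secured, subadditivity and $K$-nondecreasingness of $g$, and the correct behavior of $\bar{f^*}$ on the columns of $G$, follow from the value-function calculus already developed in \cite{Moran}; the delicate point is carrying all of this through the composition of the three constructions consistently.
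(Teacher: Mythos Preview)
Your proposal is correct and follows exactly the approach the paper indicates: the paper does not give a detailed proof of this corollary but only the one-line remark that it is obtained by combining the proofs of Proposition~\ref{Theorem:Binary}, Theorem~\ref{Theorem:Bounded}, and Proposition~5.3 of \cite{Moran}, which is precisely the composition you carry out (the $w$-perturbation, then the $\epsilon$-box relaxation of the binary variable, then the $s$-homogenization with calibrated coefficient). Your bookkeeping on the coefficients $\vartheta^*_{MIP}(b)-\vartheta^*_{MIP}(b-v)$ and $\vartheta^*_{MIP}(b)-2\Theta^*$, and on the restriction $f^*(u)=g(u,0,0)$, matches the constructions in those proofs and in the statement of the corollary.
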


\appendix
\section{Omitted proofs}
\label{app:proofs}

{\black Here, we present the proofs of Lemmas~\ref{dlemma} and~\ref{Fintegerhull} from Section~\ref{sec:DCS}.}

\begin{proof}[Proof of Lemma~\ref{dlemma}]
We must show that there exists a point $w \in \MM$ at a distance less than $\epsilon$ from the half-line $\{z + \lambda r\tq \lambda \geq \gamma\}$. We divide the proof into two cases.

\begin{itemize}
	\item {\bf Case 1: $\MM$ is a lattice.} In this case, there exists $A$, a $n\times k$ matrix with linearly independent columns such that
	$$\MM=\{Az\tq z\in \zz^k\}.$$
	
Since $A$ has  linearly independent columns, $z\in\MM$  and $r\in\aff(\MM)$, there exist $z'\in\zz^k$ such that $z=Az'$ and $r'\in \rr^k$ such that $r=Ar'$. 
	
 By applying the result by {\black Basu et al.~\cite{BCCZ2010}}, for the case $\MM=\zz^k$, we obtain that there exists a point $w' \in \MM$ at a distance less than $\epsilon/\|A\|_2$ from the half-line $\{z' + \lambda r'\tq \lambda \geq \gamma\}$. In particular, there exist $l'$ in this half-line such that $\|w'-l'\|_2\leq \epsilon/\|A\|_2 $. This implies that $\|Aw'-Al'\|_2\leq \|A\|_2 \|w'-l'\|_2 \leq \epsilon$. As $w:=Aw'\in \MM$ and $Al'$ belongs to the half-line  $\{z + \lambda r\tq \lambda \geq \gamma\}$, we conclude the proof.

	\item {\bf Case 2: $\MM$ is a general mixed-lattice.}
	
  Let $\LL\subseteq \MM$ be any lattice such that $z\in \LL$.  By {\bf Case 1}, there exist $ w\in \LL$ at a distance less than $\epsilon$ from the half-line $\{z + \lambda r\tq \lambda \geq \gamma\}$. As $w\in \MM$, we are done.
\end{itemize}
\end{proof}

\begin{proof}[Proof of Lemma~\ref{Fintegerhull}]
	Clearly, $\conv(F\cap\MM)\subseteq\conv(K\cap\MM)\cap F$. We will prove the other inclusion. Let $x\in \conv(K\cap\MM)\cap F$, we will show that $x\in \conv(F\cap\MM)$. {\black We have that} $x=\lambda_1z_1+\ldots + \lambda_pz_p$ for some $z_1,\ldots,z_p\in K\cap \MM$ and $0<\lambda_1,\ldots,\lambda_p\leq 1$ {\black such that $\lambda_1+\ldots+\lambda_p=1$.} 
	Let $D=\conv(\{z_1,\ldots,z_p\})$ and observe that $x$ belongs to the relative interior of $D$. It follows that for any $y\in D$, there exists $y'\in D$ such that $x=\lambda y + (1-\lambda)y'$. Since $y,y'\in K$, $x\in F$ and $F$ is a face of $K$, we obtain that $y,y'\in F$. As $y\in D$ was arbitrary, we obtain that $D\subseteq F$ and, in particular, $z_1,\ldots,z_p\in F$. Therefore, we conclude that $x\in \conv(F\cap\MM)$, as desired.  
\end{proof}

\bibliographystyle{plain}
\bibliography{references}

\end{document}